
\documentclass[11pt,a4paper,final]{article}
\usepackage[body={14.6true cm,  22true cm},
            headheight=1.5 em]{geometry}
\usepackage{amsmath}
\usepackage{amsfonts}
\usepackage{graphicx}
\usepackage{listings}
\usepackage{textcomp}
\usepackage{MnSymbol} 
\usepackage{enumerate}
\usepackage{hyperref}
\usepackage{url}
\usepackage{enumerate}


\bibliographystyle{plain}

\newenvironment{proof}{\noindent \textbf{Proof.}
}{\unskip\nobreak\hfill\nobreak$\square$\vskip 0.5\baselineskip}

\newtheorem{theorem}{Theorem}[section]
\newtheorem{lemma}{Lemma}[section]

\newtheorem{definition}{Definition}[section]
\newtheorem{remark}{Remark}

\newtheorem{corollary}{Corollary}[section]

 \def\square{\hbox{\vrule height8pt depth0pt
\vbox{\hrule width7.2pt\vskip7.2pt\hrule width7.2pt}\vrule height8pt
depth0pt}\smallskip}

\date{}
\title{ On the index of unbalanced signed bicyclic graphs
{\thanks{Research supported by the Natural Science Foundation of Shanghai (Grant No. 12ZR1420300),   National Natural Science
Foundation of China (No.11101284,11201303 and 11301340).}
}}

\author{  Changxiang He$^1$, Yuying Li$^1$,
Haiying Shan$^2$, Wenyan Wang$^1$\!\normalfont\textsuperscript{\dagger}     \\[5pt]
\small{{  1.  College of Science, University of Shanghai for Science
and Technology, Shanghai 200093,
China}}\\
\small{{ 2.   Department of Mathematics, Tongji University, Shanghai 200092,
China}}  }
\date{}
\begin{document}

\maketitle {\small\bf Abstract:} \,\,\,{\small In this paper,  we focus on the index ( largest eigenvalue) of the adjacency matrix of connected signed graphs. We give some general	results on the index  when the corresponding signed graph is perturbed. As applications,    we determine the first five largest index among all    unbalanced bicyclic graphs on  $n\geq 36$ vertices  together with the corresponding extremal signed graphs whose index attain these values.}
\vspace{2mm}

\noindent {{\bf Keywords:} Eigenvalue,  index, unbalanced signed graph, bicyclic graph    }

\section{Introduction}

 Given a simple graph $G=(V(G),\ E(G))$, let $\sigma: E(G) \to	 \{+1,\ -1  \}$ be a mapping defined on the  set  $E(G)$, then we call  $\Gamma= (G, \sigma)$ the {\it signed graph} with {\it underlying graph}  $G$
 and   {\it sign function}  (or {\it signature }) $\sigma$.  
 Obviously,   $G$ and $\Gamma$ share the same set of vertices (i.e. $V(\Gamma) = V(G))$ and have equal
 number of edges (i.e. $\mid E(\Gamma)\mid  =\mid E(G)\mid $).  An edge
 $e$ is {\it positive } ({\it negative}) if $\sigma (e) = +1$ (resp. $\sigma (e) = -1$).
 
 Actually, each concept defined for the underlying graph can be transferred with signed graphs. For example, the degree
 of a vertex $v$ in $G$ is also its degree in $\Gamma$. Furthermore, if some subgraph of the underlying graph is observed, then the sign
 function for the signed subgraph is the restriction of the previous one. Thus, if $v\in  V(G)$, then $\Gamma - v$ denotes the signed
 subgraph having $G-v$ as the underlying graph, while its signature is the restriction from $E(G)$ to $ E(G - v)$ (note, all edges
 incident to $v$ are deleted). Let $U \subset V(G),$ then $\Gamma[U]$ or $G(U)$ denotes the (signed) induced subgraph arising from $U$, while
$ \Gamma - U =\Gamma [V(G)\backslash U
]$.   Let $C$ be a cycle in $\Gamma$, the sign of $C$ is given by
$\sigma(C) =\Pi_{e\in C} \sigma(e)$. A cycle whose sign is $+$ (resp.$-$) is called {\it positive} (resp. {\it negative }). Alternatively,
we can say that a cycle is positive if it contains an even number of negative edges. A signed graph is {\it balanced} if all cycles are
positive; otherwise it is {\it unbalanced}.  There has been a variety of applications of balance, see \cite{Roberts}.


 The {\it adjacency matrix} of a signed graph $\Gamma= (G, \sigma)$ whose vertices are $v_1, v_2,\ldots, v_n$ is the $n\times n$ matrix $A(\Gamma )=(a_{ij})$,
 where
 \begin{equation}
 a_{ij}=\left\{ 
 	\begin{array}{lr}
 \sigma(v_iv_j), & \mbox {if }\  v_iv_j\in E(\Gamma ),\\
  0, &  \mbox{otherwise}.
 \end{array}  
 \right.
 \end{equation}
 
 Clearly, $A(\Gamma )$ is real symmetric and so all its eigenvalues are real. The   characteristic polynomial  $\det( xI -A(\Gamma )) $ of the adjacency
 matrix $A(\Gamma )$ of a signed graph $\Gamma $ is called the {\it characteristic polynomial} of $\Gamma $  and is denoted by $\phi(\Gamma, x) $ . The eigenvalues of $A(\Gamma )$  are
 called the {\it eigenvalues } of $\Gamma $. The largest eigenvalue  is often called the {\it index }, denoted by $\lambda(\Gamma)$.

 Suppose $\theta : V(G)\to \{+1, -1\}$ is any sign function.  {\it Switching } by  $\theta$ means forming a new signed graph  $\Gamma ^{\theta}=(G, \sigma^{\theta})$ 
 whose underlying graph is the same as $G$, but whose sign function is defined on
an edge $uv$ by $\sigma^{\theta} (uv) = \theta(u)\sigma (uv)\theta(v)$. 
Note that switching does not change the signs or balance of the cycles of
 $\Gamma$. If we define a (diagonal) signature matrix $D^{\theta}$ with $d_v=\theta(v)$ for each $v\in V(G),$ then
$A(\Gamma^{\theta}) = D^{\theta}A(\Gamma)D^{\theta}$. Two graphs $\Gamma_1$ and 
$\Gamma_2$ are called {\it switching equivalent}, denoted by
 $\Gamma_1\sim\Gamma_2$, 
 if there exists a switching function $\theta$ such that $\Gamma_2=\Gamma_1^{\theta} $, or equivalently $A(\Gamma_2)=D^{\theta}A(\Gamma_1)D^{\theta}$.
 
 \begin{theorem}\label{thm:swi}\cite{Hou}
 	Let $\Gamma$ be a signed graph. Then $\Gamma $ is balanced if and only if $\Gamma=(G,\sigma)\sim (G,+1)$. 
 \end{theorem}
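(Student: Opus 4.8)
The plan is to prove the two implications separately. The implication ($\Leftarrow$) is immediate: as already recorded in the excerpt, switching does not change the sign of any cycle, so if $\Gamma\sim (G,+1)$ then $\Gamma$ has exactly the same cycle signs as $(G,+1)$, all of which are positive; hence $\Gamma$ is balanced.

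For the nontrivial direction ($\Rightarrow$), assume $\Gamma=(G,\sigma)$ is balanced. First I would reduce to the case that $G$ is connected: a switching function can be chosen independently on each connected component, and $\Gamma$ is balanced if and only if each component is, so a switching function turning $\Gamma$ into an all-positive signed graph can be assembled from the componentwise ones. So assume $G$ is connected, fix a spanning tree $T$ of $G$, and root $T$ at a vertex $r$. For each $v\in V(G)$ let $P_v$ denote the unique $r$--$v$ path in $T$ and define
\[
\theta(v)=\prod_{e\in P_v}\sigma(e),
\]
with the empty product giving $\theta(r)=+1$.

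Next I would verify that $\sigma^{\theta}\equiv +1$ on all of $E(G)$. For a tree edge $uv$ with $v$ the child of $u$ we have $P_v=P_u+uv$, so $\theta(v)=\theta(u)\sigma(uv)$ and hence $\sigma^{\theta}(uv)=\theta(u)\sigma(uv)\theta(v)=\theta(u)^2\sigma(uv)^2=+1$. For a non-tree edge $e=uv$, consider its fundamental cycle $C_e$, namely $e$ together with the $u$--$v$ path in $T$. Since $\Gamma$ is balanced, $\sigma(C_e)=+1$, and since switching preserves cycle signs, $\sigma^{\theta}(C_e)=+1$ as well; but every edge of $C_e$ other than $e$ lies in $T$ and therefore already satisfies $\sigma^{\theta}=+1$, which forces $\sigma^{\theta}(e)=+1$. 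As every edge of $G$ is either a tree edge or a non-tree edge, we conclude $\Gamma^{\theta}=(G,+1)$, i.e. $\Gamma\sim (G,+1)$.

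I do not anticipate a serious obstacle: this is essentially the spanning-tree / fundamental-cycle construction. The only points needing a little care are the reduction to connected underlying graphs and the bookkeeping in the definition of $\theta$ (rooting $T$ so that the recursion $\theta(v)=\theta(u)\sigma(uv)$ along tree edges is consistent), together with the use — already granted by the excerpt — that switching leaves all cycle signs unchanged.
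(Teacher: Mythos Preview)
Your proof is correct and is the standard spanning-tree / fundamental-cycle argument for Harary's balance theorem. Note, however, that the paper does not supply its own proof of this statement: Theorem~\ref{thm:swi} is quoted from \cite{Hou} and used as a black box, so there is no in-paper argument to compare against. Your write-up would serve perfectly well as a self-contained justification; the only cosmetic point is that the reduction to connected $G$ and the rooting of $T$ are routine and could be compressed, but nothing is missing or incorrect.
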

 Switching equivalence is a relation of equivalence, and two switching equivalent graphs
 have the same eigenvalues. In fact, the signature on bridges is not relevant, hence the edges which do
 not lie on some cycles are not relevant for the signature and they will be always considered as positive.
 
One classical problem of graph spectra is to identify the extremal graphs with respect to the index in some given class of graphs. For signed graphs, since all signatures of a given tree are equivalent, the first non-trivial signature arises for unicyclic graphs, which was considered  in \cite{Akbari}.  The authors determined signed graphs achieving the minimal or the maximal index in the class of unbalanced unicyclic graphs of order $n\geq 3$. In \cite{Fan}, the authors characterized the unicyclic signed graphs of order $n$ with nullity
$n-2$, $n- 3$, $n- 4$, $n- 5$ respectively. For the energy of singed graphs, see \cite{1}, \cite{2}, \cite{Hafeeza},\cite{Pirzada},\cite{Wang},\cite{Wang2} for details.

Here, we will consider unbalanced bicyclic graphs, and  determine the first five largest index among all    unbalanced bicyclic graphs with given order $n\geq 36$  together with the corresponding extremal signed graphs whose index attain these values.

Here is the remainder of the paper. In Section 2, we study the effect of some edges  moving on the index of a signed graph. In Section 3, we introduce the three classes of signed bicyclic graphs. In Section 4, we determine the first five  graphs in the set of    unbalanced bicyclic graphs on  $n\geq 36$ vertices, and order them according to their index in decreasing order.

 \section{Preliminaries}
 The purpose of this section is to analyze how the  index change when modifications are made to a signed graph.  We start with one important tool which also works in  signed graphs. Its general form holds for any principal submatrix of a real symmetric matrix.

\begin{lemma}\label{th:interlacing}
	 (Interlacing theorem for signed graphs). Let   $\Gamma= (G, \sigma)$ be a signed graph of order $n$ and $\Gamma -v$ be the signed graph obtained from  $\Gamma$ by deleting the vertex $v$. If $\lambda_i$ are the (adjacency) eigenvalues, then
 $$\lambda_1(\Gamma)\geq \lambda_1(\Gamma-v)\geq \lambda_2(\Gamma)\geq \lambda_2(\Gamma-v)\geq \ldots  \geq \lambda_{n-1}(\Gamma-v)\geq \lambda_n(\Gamma). $$ 
\end{lemma}

\begin{lemma}\label{lem:cut}
	Let $\Gamma$   be a signed graph with cut edge $uv$,    and  ${\bf x}$ be an   eigenvector corresponding to the index $\lambda(\Gamma)$.  We have $\sigma(uv)x_{u}x_v\geq 0$.  
\end{lemma}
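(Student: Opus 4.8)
The plan is to combine the variational characterization of the index with a sign-flipping trick across the cut edge. Recall that since $A(\Gamma)$ is real symmetric, $\lambda(\Gamma)$ is the maximum of the Rayleigh quotient $\mathbf{y}^{T}A(\Gamma)\mathbf{y}/\mathbf{y}^{T}\mathbf{y}$ over nonzero $\mathbf{y}$, the maximum being attained exactly at eigenvectors for $\lambda(\Gamma)$; in particular $\mathbf{x}^{T}A(\Gamma)\mathbf{x}=\lambda(\Gamma)\,\mathbf{x}^{T}\mathbf{x}$. Writing the quadratic form edge-by-edge, $\mathbf{y}^{T}A(\Gamma)\mathbf{y}=2\sum_{ab\in E(\Gamma)}\sigma(ab)\,y_{a}y_{b}$. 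I would argue by contradiction: assume $\sigma(uv)x_{u}x_{v}<0$ and build from $\mathbf{x}$ a vector with strictly larger Rayleigh quotient.

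First I would use that $uv$ is a cut edge: $\Gamma-uv$ has exactly two connected components, say $\Gamma_{u}$ containing $u$ and $\Gamma_{v}$ containing $v$, and $uv$ is the only edge of $\Gamma$ joining $V(\Gamma_{u})$ to $V(\Gamma_{v})$. Define $\mathbf{z}$ by $z_{w}=x_{w}$ for $w\in V(\Gamma_{u})$ and $z_{w}=-x_{w}$ for $w\in V(\Gamma_{v})$. Every edge lying entirely inside $\Gamma_{u}$ or entirely inside $\Gamma_{v}$ contributes the same amount to the edge sums of $\mathbf{x}$ and of $\mathbf{z}$ (both endpoints either keep their sign or both flip it), whereas the edge $uv$ contributes $2\sigma(uv)z_{u}z_{v}=-2\sigma(uv)x_{u}x_{v}$ for $\mathbf{z}$ against $2\sigma(uv)x_{u}x_{v}$ for $\mathbf{x}$. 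Hence $\mathbf{z}^{T}A(\Gamma)\mathbf{z}-\mathbf{x}^{T}A(\Gamma)\mathbf{x}=-4\sigma(uv)x_{u}x_{v}>0$. Since changing signs of coordinates preserves the Euclidean norm, $\mathbf{z}^{T}\mathbf{z}=\mathbf{x}^{T}\mathbf{x}>0$, so
$$\frac{\mathbf{z}^{T}A(\Gamma)\mathbf{z}}{\mathbf{z}^{T}\mathbf{z}}>\frac{\mathbf{x}^{T}A(\Gamma)\mathbf{x}}{\mathbf{x}^{T}\mathbf{x}}=\lambda(\Gamma),$$
which contradicts the maximality of $\lambda(\Gamma)$. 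Therefore $\sigma(uv)x_{u}x_{v}\geq 0$.

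I do not expect a serious obstacle here; the only point needing care is the bookkeeping that $uv$ is genuinely the unique edge between $V(\Gamma_{u})$ and $V(\Gamma_{v})$, so that the quadratic form changes solely in the $uv$-term — and this is precisely the defining property of a cut edge. As a consistency check one may observe that $\mathbf{z}=D^{\theta}\mathbf{x}$, where $\theta$ switches all of $\Gamma_{v}$, is an eigenvector of the switching-equivalent graph $\Gamma^{\theta}$ (which differs from $\Gamma$ only in the sign of the bridge $uv$) for the same eigenvalue, so the statement of the lemma for $\Gamma$ and for $\Gamma^{\theta}$ are equivalent, in agreement with the remark that signatures on bridges are irrelevant.
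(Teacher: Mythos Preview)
Your argument is correct and is essentially identical to the paper's own proof: both proceed by contradiction, flip the sign of the eigenvector on one side of the cut, and compute that the quadratic form increases by $-4\sigma(uv)x_ux_v>0$, contradicting the Rayleigh-quotient maximality. The only cosmetic difference is that the paper first reduces to $\sigma(uv)=+1$ before flipping, whereas you keep $\sigma(uv)$ in the computation throughout.
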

\begin{proof}
Without loss of generality, we assume that ${\bf x}$ is unit and $\sigma(uv)>0$.   By way of contradiction, we suppose that $ x_{u}x_v<0$.	Let  $\Gamma _1$ and $\Gamma_2 $ be the two connected components  of $\Gamma -uv$, respectively.  Set ${\bf x}=\left( \begin{matrix}
	
	{\bf x}_1\\
	
	{\bf x}_2
	
\end{matrix} \right)$, where $	{\bf x}_1$ and  $	{\bf x}_2$ are  the subvectors of ${\bf x}$   indexed by vertices in  $\Gamma_1$ and $\Gamma_2$, respectively. Let  ${\bf y}=\left( \begin{matrix}

-{\bf x}_1\\

{\bf x}_2

\end{matrix} \right)$, then ${\bf y}^{T}A(\Gamma){\bf y}-{\bf x}^{T}A(\Gamma){\bf x}=-4 x_{u}x_v> 0$, which contradicts to the fact that   ${\bf x}$ maximizes the Rayleigh quotient.  
\end{proof}

From the above lemma, it is straightforward to derive the   following result.
\begin{corollary} 
Let $T$ be a    vertex induced subtree in the signed graph $\Gamma $,  and  ${\bf x}$ be an   eigenvector corresponding to the index $\lambda(\Gamma)$. Then for any edge  $uv$  of  $T$, we have $\sigma(uv)x_{u}x_v\geq 0$. 
\end{corollary}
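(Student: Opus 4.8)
The plan is to bootstrap from Lemma~\ref{lem:cut} by treating the subtree $T$ one edge at a time. The key observation is that deleting any edge $uv$ of $T$ from the whole signed graph $\Gamma$ need not disconnect $\Gamma$ (since $\Gamma$ may have cycles outside $T$), so Lemma~\ref{lem:cut} does not apply verbatim. Instead, I would exploit the fact that $T$ is a \emph{vertex-induced} subtree, so each edge $uv$ of $T$ is a cut edge of the induced subgraph $\Gamma[V(T)]$, and work with a suitably modified eigenvector on all of $V(\Gamma)$ rather than passing to the subgraph.

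\medskip

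\noindent\textbf{Step 1.} Fix an edge $uv$ of $T$. Since $T$ is a tree, $T-uv$ has two components $T_u\ni u$ and $T_v\ni v$. Because $T$ is induced in $\Gamma$, there is no edge of $\Gamma$ joining a vertex of $T_u$ to a vertex of $T_v$ \emph{other than} $uv$ — this is exactly where ``vertex induced'' is used, and it is the crux of the argument. (If the subtree were not induced, a chord of $\Gamma$ between $T_u$ and $T_v$ would break the sign cancellation below.)

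\medskip

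\noindent\textbf{Step 2.} As in the proof of Lemma~\ref{lem:cut}, assume without loss of generality (after switching, which changes neither the spectrum nor the eigenvector up to signs on vertices) that $\sigma(uv)>0$, take ${\bf x}$ a unit eigenvector for $\lambda(\Gamma)$, and suppose for contradiction that $x_ux_v<0$. Let $W=V(T_u)$ and define ${\bf y}$ by $y_w=-x_w$ for $w\in W$ and $y_w=x_w$ otherwise. Then ${\bf y}^{T}A(\Gamma){\bf y}-{\bf x}^{T}A(\Gamma){\bf x} = -4\!\!\sum\limits_{\substack{e=ab\in E(\Gamma)\\ a\in W,\ b\notin W}}\!\!\sigma(ab)\,x_a x_b$. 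By Step~1 the only term in this sum is the edge $uv$, so the difference equals $-4\sigma(uv)x_ux_v = -4x_ux_v>0$, contradicting that ${\bf x}$ maximizes the Rayleigh quotient. Hence $x_ux_v\ge 0$, i.e.\ $\sigma(uv)x_ux_v\ge 0$ in the original (unswitched) graph.

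\medskip

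\noindent\textbf{The main obstacle} is precisely isolating that the edge cut in $\Gamma$ separating $V(T_u)$ from $V(T_v)$ consists of the single edge $uv$; everything else is a routine repeat of the Rayleigh-quotient sign-flip trick. Once that is in place, applying the argument to every edge of $T$ gives the claim. Alternatively one could phrase Step~2 using the eigenvalue equations $\lambda x_u=\sum_{w\sim u}\sigma(uw)x_w$ directly, but the variational argument mirrors the lemma's proof most transparently.
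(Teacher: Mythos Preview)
Your Step~2 contains a genuine gap. You set $W=V(T_u)$ and assert that the only edge of $\Gamma$ with exactly one endpoint in $W$ is $uv$. But Step~1 only rules out edges of $\Gamma$ between $V(T_u)$ and $V(T_v)$; it says nothing about edges from $V(T_u)$ to $V(\Gamma)\setminus V(T)$. Any such edge contributes an uncontrolled term $\sigma(ab)x_ax_b$ to the difference ${\bf y}^{T}A(\Gamma){\bf y}-{\bf x}^{T}A(\Gamma){\bf x}$, and the contradiction collapses.

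In fact, under your reading of ``vertex induced subtree'' (merely an induced subgraph that happens to be a tree) the corollary is false. Let $\Gamma$ be the unbalanced $4$-cycle $v_1v_2v_3v_4v_1$ with $\sigma(v_1v_2)=-1$ and the remaining edges positive. The index equals $\sqrt 2$, and one checks that ${\bf x}=(1,\,-1,\,1-\sqrt 2,\,\sqrt 2-1)$ is an eigenvector for it. The path on $\{v_2,v_3,v_4\}$ is an induced subtree, yet for its edge $v_3v_4$ we get $\sigma(v_3v_4)x_{v_3}x_{v_4}=(1-\sqrt 2)(\sqrt 2-1)<0$. (Here, taking $W=\{v_2,v_3\}$ as in your Step~2, the boundary of $W$ in $\Gamma$ contains \emph{two} edges, $v_3v_4$ and $v_1v_2$, illustrating exactly the missing term.)

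What the paper intends by ``vertex induced subtree'' --- see Remark~\ref{rem:tree} and the paragraph following Corollary~\ref{lem:concut}, both of which speak of a \emph{root} $v$ --- is a pendant tree hanging off $\Gamma$: every edge of $T$ is already a cut edge of $\Gamma$. With that hypothesis the corollary is literally Lemma~\ref{lem:cut} applied to each edge of $T$, which is why the paper calls it ``straightforward''. Your attempt to weaken the hypothesis is well-motivated but, as the example shows, cannot succeed.
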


\begin{remark}\label{rem:tree}
If $T$ is  a    vertex induced subtree  with root $v$ in signed graph $\Gamma $, the above corollary implies that if $x_v\geq 0$  we can assume that all edges in $T$ are positive and all vertices of  $T$ have  non-negative coordinates in ${\bf x}$. This is valid because we can prove it by  using switching  equivalent from the leaves of the rooted subtree.
\end{remark}

We proceed by considering how the index change when cut edges be moved.
\begin{lemma}\label{lem:gra1}
	Let $u$, $v$ be two vertices of the signed graph $\Gamma$,    $vv_1,\ldots,vv_s\ (s\geq 1)$ be cut edges of    $\Gamma$, and ${\bf x}$ be an eigenvector corresponding to $\lambda(\Gamma)$. Let $$\Gamma'=\Gamma-vv_1-\ldots-vv_s+uv_1+\ldots+uv_s.$$ If $x_u \geq  x_v \geq 0$ or  $x_u \leq  x_v \leq 0$, we have   $\lambda(\Gamma')\geq  \lambda(\Gamma)$. 
\end{lemma}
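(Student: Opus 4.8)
The plan is to use the Rayleigh quotient characterization of the index together with the sign information provided by Lemma~\ref{lem:cut}. First I would fix a unit eigenvector $\mathbf{x}$ of $\Gamma$ associated with $\lambda(\Gamma)$. By Remark~\ref{rem:tree}, since each $vv_i$ is a cut edge, the pendant structure hanging off $v$ through the edges $vv_1,\dots,vv_s$ is (a forest of) induced subtrees, so after a harmless switching I may assume every $vv_i$ is positive and every coordinate $x_{v_i}$ (and indeed all coordinates in those subtrees) has the same sign as $x_v$, i.e. is $\geq 0$ in the case $x_u\geq x_v\geq 0$ (the case $x_u\le x_v\le 0$ is symmetric, or reduces to it by replacing $\mathbf{x}$ with $-\mathbf{x}$). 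Likewise, I would switch so that the new edges $uv_i$ in $\Gamma'$ are positive; this is legitimate because moving the cut edges keeps them as cut edges, and the signature on cut edges is irrelevant.

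The core computation is to bound $\lambda(\Gamma') \ge \mathbf{x}^{T} A(\Gamma')\,\mathbf{x}$, using $\mathbf{x}$ itself as a trial vector on $V(\Gamma')=V(\Gamma)$. Comparing $A(\Gamma')$ with $A(\Gamma)$, only the entries corresponding to the moved edges change: the terms $2\sum_{i=1}^{s}\sigma(vv_i)x_v x_{v_i}$ in $\mathbf{x}^{T}A(\Gamma)\mathbf{x}$ are replaced by $2\sum_{i=1}^{s}\sigma(uv_i)x_u x_{v_i}$ in $\mathbf{x}^{T}A(\Gamma')\mathbf{x}$. Hence
\[
\mathbf{x}^{T}A(\Gamma')\mathbf{x}-\mathbf{x}^{T}A(\Gamma)\mathbf{x}
=2\sum_{i=1}^{s}\bigl(\sigma(uv_i)x_u-\sigma(vv_i)x_v\bigr)x_{v_i}
=2(x_u-x_v)\sum_{i=1}^{s}x_{v_i},
\]
using the sign normalizations above. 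Under the hypothesis $x_u\ge x_v\ge 0$ together with $x_{v_i}\ge 0$ for all $i$, the right-hand side is $\ge 0$, so $\mathbf{x}^{T}A(\Gamma')\mathbf{x}\ge \mathbf{x}^{T}A(\Gamma)\mathbf{x}=\lambda(\Gamma)$, and since $\mathbf{x}$ is a unit vector, $\lambda(\Gamma')\ge\mathbf{x}^{T}A(\Gamma')\mathbf{x}\ge\lambda(\Gamma)$, which is the claim.

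The step I expect to require the most care is the bookkeeping of switchings, specifically making sure that the single switching function $\theta$ can simultaneously be chosen to make all the $vv_i$ positive in $\Gamma$ \emph{and} leave $x_u,x_v$ (and the rest of $\Gamma$ outside the pendant trees) untouched, and then separately that the $uv_i$ can be taken positive in $\Gamma'$. This works precisely because the edges $vv_i$ lie on no cycle of $\Gamma$ and the edges $uv_i$ lie on no cycle of $\Gamma'$, so adjusting their signatures (and the signs of coordinates in the attached subtrees) changes neither the eigenvalues nor the balance; I would state this explicitly, invoking Theorem~\ref{thm:swi}, Lemma~\ref{lem:cut} and Remark~\ref{rem:tree}. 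One should also note the degenerate possibility that some $v_i$ coincides with $u$ or that the subtrees overlap the rest of $\Gamma$ only at $v$; since the $vv_i$ are cut edges this cannot create a conflict, but it is worth a sentence. Everything else is the routine Rayleigh-quotient estimate above.
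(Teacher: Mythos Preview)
Your argument is correct and follows the same Rayleigh quotient approach as the paper: compare $\mathbf{x}^T A(\Gamma')\mathbf{x}$ with $\mathbf{x}^T A(\Gamma)\mathbf{x}$ using the given eigenvector, and invoke Lemma~\ref{lem:cut} to control the sign of the difference. The paper's proof is slightly leaner in that it skips the switching altogether, writing the difference directly as $2(x_u-x_v)\sum_{i=1}^s \sigma(vv_i)x_{v_i}$ (the new edges $uv_i$ inherit the signs of $vv_i$) and observing from Lemma~\ref{lem:cut} that $\sigma(vv_i)x_v x_{v_i}\ge 0$, so each summand $\sigma(vv_i)x_{v_i}$ has the same sign as $x_v$.

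One small inaccuracy to flag: your claim that ``the pendant structure hanging off $v$ through the edges $vv_1,\dots,vv_s$ is (a forest of) induced subtrees'' is not true in general; a cut edge can separate off a component containing cycles, so Remark~\ref{rem:tree} does not apply. Fortunately you do not actually need this: all that matters is the sign of $x_{v_i}$ relative to $x_v$, and that is delivered by Lemma~\ref{lem:cut} alone (which you also cite). If you drop the subtree language and argue directly with $\sigma(vv_i)x_{v_i}$ as the paper does, the switching bookkeeping you worried about disappears entirely.
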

\begin{proof}
 Without loss of generality, we assume that ${\bf x}$ is unit. Due to the Rayleigh quotient, we   have    
$$\lambda(\Gamma')-\lambda(\Gamma)\geq  {\bf x}^T A(\Gamma') {\bf x}-{\bf x}^T A(\Gamma) {\bf x}=(x_u-x_v) \sum\limits_{i=1}^s \sigma(vv_i)x_{v_i}.$$

Lemma \ref{lem:cut} tell us that $\sigma(vv_i)x_{v_i}x_{v}\geq 0$, 
one can quickly verify that $\lambda(\Gamma')\geq \lambda(\Gamma)$ when $x_u \geq  x_v \geq 0$ or  $x_u \leq  x_v \leq 0$.
\end{proof}

If $vv_1,\ldots,vv_s $ are pendant edges  in the above lemma, the eigenvalue equation leads to  $\lambda(\Gamma) x_{v_i} =\sigma(vv_i) x_v$, which implies that $\sigma(vv_i) x_v x_{v_i}>0$  when $x_v\neq 0$, so we can get a stronger version of the above result.

\begin{lemma}\label{lem:gra2}
	Let $u$, $v$ be two vertices of signed graph $\Gamma$,     $vv_1,\ldots,vv_s\ (s\geq 1)$ be pendant edges of    $\Gamma$, and ${\bf x}$ be an eigenvector corresponding to $\lambda(\Gamma)$. Let $$\Gamma'=\Gamma-vv_1-\ldots-vv_s+uv_1+\ldots+uv_s.$$ If $x_u \geq  x_v \geq 0$ or  $x_u \leq  x_v \leq 0$, we have   $\lambda(\Gamma')\geq  \lambda(\Gamma)$. Furthermore, if  $x_u >  x_v > 0$ or  $x_u < x_v < 0$, then   $\lambda(\Gamma')>  \lambda(\Gamma)$.
\end{lemma}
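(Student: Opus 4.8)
The plan is to mimic the proof of Lemma~\ref{lem:gra1} but to exploit the extra rigidity available for pendant edges. As before, I would assume $\mathbf{x}$ is unit and use the Rayleigh quotient to write
\[
\lambda(\Gamma')-\lambda(\Gamma)\;\geq\;\mathbf{x}^{T}A(\Gamma')\mathbf{x}-\mathbf{x}^{T}A(\Gamma)\mathbf{x}\;=\;(x_u-x_v)\sum_{i=1}^{s}\sigma(vv_i)x_{v_i}.
\]
Since each $vv_i$ is in particular a cut edge, Lemma~\ref{lem:cut} gives $\sigma(vv_i)x_{v_i}x_v\geq 0$, and hence $(x_u-x_v)\sum_i\sigma(vv_i)x_{v_i}\geq 0$ under either sign hypothesis $x_u\geq x_v\geq 0$ or $x_u\leq x_v\leq 0$; this already yields $\lambda(\Gamma')\geq\lambda(\Gamma)$, exactly as in Lemma~\ref{lem:gra1}.

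For the strict inequality under $x_u>x_v>0$ (the case $x_u<x_v<0$ being symmetric, or reducible to it by replacing $\mathbf{x}$ with $-\mathbf{x}$), I would argue by contradiction. Suppose $\lambda(\Gamma')=\lambda(\Gamma)$. Then the chain of inequalities above must be tight, so $\mathbf{x}$ is also an eigenvector of $A(\Gamma')$ for $\lambda(\Gamma')=\lambda(\Gamma)=:\lambda$, and moreover $\sum_{i=1}^{s}\sigma(vv_i)x_{v_i}=0$ (since $x_u-x_v>0$). Now I would use the eigenvalue equations for $\Gamma$ at each pendant vertex $v_i$: because $vv_i$ is pendant, $v_i$ has $v$ as its unique neighbour, so $\lambda x_{v_i}=\sigma(vv_i)x_v$, i.e. $\sigma(vv_i)x_{v_i}=x_v/\lambda$ when $\lambda\neq 0$ (and $\lambda>0$ since the index of a graph with at least one edge is positive). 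Substituting gives $\sum_{i=1}^s \sigma(vv_i)x_{v_i}=s\,x_v/\lambda$, which is strictly positive because $x_v>0$ and $s\geq 1$ — contradicting $\sum_i\sigma(vv_i)x_{v_i}=0$. Hence $\lambda(\Gamma')>\lambda(\Gamma)$.

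The only point requiring mild care is the justification that $\lambda>0$, so that dividing by $\lambda$ is legitimate; this follows since $\Gamma$ contains the edge $vv_1$ and therefore an induced $K_2$, whose eigenvalues are $\pm 1$, so by Lemma~\ref{th:interlacing} $\lambda(\Gamma)\geq 1>0$. I do not anticipate a genuine obstacle here — the pendant-edge equations make the argument essentially forced — but one should double-check that the hypothesis $x_u>x_v>0$ is used exactly once, to conclude both that $x_u-x_v>0$ (forcing $\sum_i\sigma(vv_i)x_{v_i}=0$ in the equality case) and that $x_v>0$ (making the computed sum $s x_v/\lambda$ strictly positive). An alternative, slightly slicker finish avoiding the case $\lambda=0$ entirely: from $\lambda x_{v_i}=\sigma(vv_i)x_v$ we get $\sigma(vv_i)x_{v_i}x_v=x_{v_i}^2\lambda\ge 0$ with equality iff $x_{v_i}=0$ iff $x_v=0$ (again via the pendant equation), so under $x_v>0$ every term $\sigma(vv_i)x_{v_i}$ is strictly positive, and the sum cannot vanish; this is the version I would actually write.
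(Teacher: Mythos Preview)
Your proposal is correct and matches the paper's approach. The paper's proof is in fact just the sentence preceding the lemma: from the eigenvalue equation at a pendant vertex, $\lambda(\Gamma)x_{v_i}=\sigma(vv_i)x_v$, one gets $\sigma(vv_i)x_vx_{v_i}>0$ whenever $x_v\neq 0$, which plugged into the Rayleigh-quotient inequality of Lemma~\ref{lem:gra1} gives the strict conclusion directly --- this is exactly your ``alternative, slicker finish,'' while your contradiction version is a harmless repackaging of the same computation (the intermediate claim that $\mathbf{x}$ is an eigenvector of $A(\Gamma')$ is true but unused).
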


In Lemma \ref{lem:gra1} and Lemma \ref{lem:gra2}, the edges be moved are all cut edges. Now the perturbation, $\alpha $-transform, described in the following  can be seen in many books  and many other papers, which can move non-cut edges from one vertex to another.
 
  \begin{definition} Let  $\Gamma$ be a connected signed  graph, $uv$ be a non-pendant
 	edge of $\Gamma$ which is not in any triangle. Let $N_{\Gamma}(v)\backslash
 	\{u\}=\{v_1,\cdots, v_d\}$ with $   d\geq1$. The signed  graph
 	$$\Gamma'=\alpha(\Gamma,uv)=\Gamma-vv_1-vv_2-\cdots-vv_d+uv_1+uv_2+\cdots+uv_d.$$ We say that $\Gamma'$ is an
 	$\alpha$-transform of $\Gamma$ on the edge $uv$. 
 	\end{definition}
 All  edges retain the sign
 they have after $\alpha$-transform. In the next, we focus on how the index changes  after  $\alpha$-transform.
 
\begin{lemma}\label{lem:con}
	Let $uv$ be an edge of signed graph $\Gamma$, and $\Gamma'=\alpha(\Gamma, uv)$ be the graph obtained from  $\Gamma$ by $\alpha$-transform on the edge $uv$. Let  $\bf{x}$ be an   eigenvector corresponding to $\lambda(\Gamma)$. If one of the following condition holds, we have $\lambda(\Gamma')\geq \lambda(\Gamma)$:
	\begin{enumerate}[(1).]
		\item   if $\sigma(uv)>0$, and $x_v\leq x_u\leq \lambda(\Gamma )x_v$,
		\item  if $\sigma(uv)<0$ and $x_u\geq 0,\ x_v\geq 0$.
	\end{enumerate}
Furthermore, if one of the following can be satisfied:
 \begin{enumerate}[(1).]
	\item   if $\sigma(uv)>0$, and $x_v< x_u< \lambda(\Gamma )x_v$,
	\item  if $\sigma(uv)<0$, and $x_u>0,\ x_v> 0$ and $x_u\neq x_v$,
\end{enumerate}
we have  $\lambda(\Gamma')> \lambda(\Gamma)$.
\end{lemma}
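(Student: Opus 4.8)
The plan is a Rayleigh-quotient comparison driven by the eigenvalue equation of $\Gamma$ at $v$; note first that $\lambda:=\lambda(\Gamma)\ge 1$ since $\Gamma$ has an edge, and normalise $\mathbf{x}$ to be a unit $\lambda$-eigenvector. Since $uv$ lies in no triangle we have $v_i\notin N_\Gamma(u)$, so $\alpha(\Gamma,uv)$ has no parallel edges and $A(\Gamma')_{uv_i}=\sigma(vv_i)$; a direct computation then gives $\mathbf{x}^{T}A(\Gamma')\mathbf{x}-\mathbf{x}^{T}A(\Gamma)\mathbf{x}=2(x_u-x_v)\sum_{i=1}^{d}\sigma(vv_i)x_{v_i}$, and the identity $\lambda x_v=\sigma(uv)x_u+\sum_{i}\sigma(vv_i)x_{v_i}$ turns this into $2(x_u-x_v)(\lambda x_v-\sigma(uv)x_u)$. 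In case (1), where $\sigma(uv)>0$, this is $2(x_u-x_v)(\lambda x_v-x_u)$, whose two factors are nonnegative under $x_v\le x_u\le\lambda x_v$; hence $\mathbf{x}^{T}A(\Gamma')\mathbf{x}\ge\lambda$ and $\lambda(\Gamma')\ge\lambda(\Gamma)$ by the Rayleigh principle, with strict inequality if $x_v<x_u<\lambda x_v$.

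In case (2), where $\sigma(uv)<0$, the same quantity equals $2(x_u-x_v)(\lambda x_v+x_u)$: the factor $\lambda x_v+x_u$ is nonnegative because $x_u,x_v\ge 0$, but the sign of $x_u-x_v$ is not controlled, so $\mathbf{x}$ alone need not certify the inequality (and one can build examples on a negative $4$-cycle with $x_u=0$ in which $\lambda(\Gamma')$ nonetheless far exceeds $\lambda(\Gamma)$). I would instead pass to a contraction. After the $\alpha$-transform $v$ is a pendant vertex of $\Gamma'$ attached at $u$, so $\Gamma'-v$ is exactly the signed graph $H$ obtained from $\Gamma$ by contracting $uv$ (again no parallel edges, since $uv$ is triangle-free), with all signs inherited. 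By the interlacing theorem (Lemma~\ref{th:interlacing}), $\lambda(\Gamma')\ge\lambda(\Gamma'-v)=\lambda(H)$, so it is enough to show $\lambda(H)\ge\lambda(\Gamma)$.

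For that, use the test vector $\mathbf{w}$ equal to $\mathbf{x}$ on $V(\Gamma)\setminus\{u,v\}$ and equal to a free parameter $m$ on the contracted vertex. Expanding $\mathbf{w}^{T}A(H)\mathbf{w}-\lambda\,\mathbf{w}^{T}\mathbf{w}$ and substituting the eigenvalue equations of $\Gamma$ at \emph{both} $u$ and $v$ collapses the desired inequality $\mathbf{w}^{T}A(H)\mathbf{w}\ge\lambda\,\mathbf{w}^{T}\mathbf{w}$ to the assertion that the quadratic $\lambda m^{2}-2(\lambda+1)(x_u+x_v)m+\lambda(x_u^{2}+x_v^{2})+2x_ux_v$ is nonpositive; evaluating it at its minimiser $m=(\lambda+1)(x_u+x_v)/\lambda$, the minimum is nonpositive because it amounts to $\lambda(x_u^{2}+x_v^{2})+2x_ux_v\le\frac{(\lambda+1)^{2}(x_u+x_v)^{2}}{\lambda}$, which after clearing denominators reads $2\lambda^{2}x_ux_v+2\lambda(x_u^{2}+x_v^{2})+2\lambda x_ux_v+(x_u+x_v)^{2}\ge 0$, a sum of nonnegative terms. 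Hence $\lambda(H)\ge\lambda(\Gamma)$; if $x_u,x_v>0$ the last quantity is strictly positive, so $\lambda(H)>\lambda(\Gamma)$ and the strict conclusion follows. The only extra checks are $\mathbf{w}\neq\mathbf{0}$ (true since an eigenvector of the connected graph $\Gamma$ cannot be supported on $\{u,v\}$ alone, so $x_u^{2}+x_v^{2}<1$) and $\lambda>0$.

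The main obstacle is case (2): the one-line Rayleigh bound that handles case (1) genuinely fails, so the substance lies in spotting that $\Gamma'-v$ is a contraction of $\Gamma$ and then in the two-equation bookkeeping that renders the obstruction term a manifest sum of nonnegative quantities; once that identity is in place, the remaining steps are routine.
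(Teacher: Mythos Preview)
Your argument is correct, but in case~(2) it is considerably more elaborate than necessary, and your assertion that ``the one-line Rayleigh bound that handles case~(1) genuinely fails'' is wrong. The paper's proof of case~(2) is again a one-line Rayleigh computation, resting on the simple observation that $\alpha(\Gamma,uv)$ and $\alpha(\Gamma,vu)$ are isomorphic (both collapse $N_\Gamma(u)\cup N_\Gamma(v)$ onto a single vertex and leave the other as a pendant). Hence the same reasoning applied with the roles of $u$ and $v$ interchanged yields, alongside your bound $\lambda(\Gamma')-\lambda(\Gamma)\ge 2(x_u-x_v)(\lambda x_v+x_u)$, the companion bound $\lambda(\Gamma')-\lambda(\Gamma)\ge 2(x_v-x_u)(\lambda x_u+x_v)$; when $x_u,x_v\ge 0$ one of the two right-hand sides is nonnegative, and strictly positive if $x_u,x_v>0$ with $x_u\neq x_v$.

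Your detour through the contraction $H=\Gamma'-v$ and the quadratic in $m$ is valid and the bookkeeping checks out (including $\mathbf w\neq 0$, since $\Gamma$ is connected by the definition of the $\alpha$-transform and $v$ has a neighbour $v_1\neq u$ with $v_1\not\sim u$). It even buys you something the paper's argument does not: your strict inequality needs only $x_u,x_v>0$, not the extra hypothesis $x_u\neq x_v$. So the approaches trade off: the paper's symmetry trick is vastly shorter, while your contraction argument gives a marginally sharper conclusion.
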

\begin{proof}
  Let $N_{\Gamma}(u)\backslash \{v\}=\{u_1,\ldots,u_r\}$ 
 and  $N_{\Gamma}(v)\backslash \{u\}=\{v_1,\ldots,v_s\}$. 
 The eigenvalue equation leads to the relation 
 $$\lambda(\Gamma)x_v=\sigma(uv)x_u+\sum\limits_{v_i\in N_{\Gamma }(v)\backslash \{u\}} \sigma(vv_i)x_{v_i},$$
  $$\lambda(\Gamma)x_u=\sigma(uv)x_v+\sum\limits_{u_i\in N_{\Gamma }(u)\backslash \{v\}} \sigma(uu_i)x_{u_i}.$$
  
  These      then easily imply that
  \begin{align} \lambda(\Gamma')-\lambda(\Gamma)&\geq {\bf x}^T A(\Gamma') {\bf x}-{\bf x}^T A(\Gamma) {\bf x}=(x_u-x_v)\sum\limits_{v_i\in N_G(u)\backslash \{v\}} \sigma(vv_i)x_{v_i}\\
  &=(x_u-x_v)(\lambda(\Gamma)x_v-\sigma(uv)x_u),
  \end{align} 
  and \begin{align}\lambda(\Gamma')-\lambda(\Gamma)&\geq {\bf x}^T A(\Gamma') {\bf x}-{\bf x}^T A(\Gamma) {\bf x}=(x_v-x_u)\sum\limits_{u_i\in N_G(v)\backslash\{u\}} \sigma(uu_i)x_{u_i}\\
&= (x_v-x_u)(\lambda(\Gamma)x_u-\sigma(uv)x_v). 
\end{align}

So that if  $\sigma(uv)>0$, applying (3), we estimate that $$\lambda(\Gamma')-\lambda(\Gamma)\geq (x_u-x_v)(\lambda(\Gamma)x_v-x_u). $$ Thus, $\lambda(\Gamma')\geq\lambda(\Gamma)$ when  $x_v\leq x_u\leq \lambda(\Gamma )x_v$, the inequality is strict when  $x_v< x_u< \lambda(\Gamma )x_v$.

If $\sigma(uv)<0$, it seems more complicated. By (5), we know $$\lambda(\Gamma')-\lambda(\Gamma)\geq (x_u-x_v)(\lambda(\Gamma)x_v+x_u).$$ 
The symmetry tell us that we also have $$\lambda(\Gamma')-\lambda(\Gamma)\geq (x_v-x_u)(\lambda(\Gamma)x_u+x_v).$$  Therefore, if $x_u\geq 0,\ x_v\geq 0$, then   $\lambda(\Gamma')\geq\lambda(\Gamma)$  whenever $x_u\geq v_v$ or $x_u<x_v$, and  the inequality is strict when   $x_u>0,\ x_v> 0$ and $x_u\neq x_v$.
\end{proof}

In all figures, solid   and dotted edges represent positive and negative edges, respectively.

\begin{figure}[!htb] 
	\begin{center} 
		\includegraphics{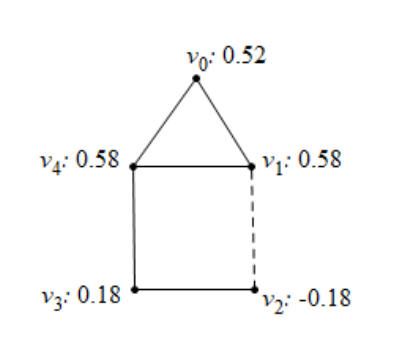}
		\makeatletter\def\@captype{figure}\makeatother 
		\vspace{-0.5cm}
		\caption{ The  example $\Gamma$ in Remark 2}\label{cont}
	\end{center}
\end{figure}

\begin{remark}
	The conditions in Lemma \ref{lem:con} are necessary. For example, the    signed graph $\Gamma $ (as shown in Figure \ref{cont}) with index $\lambda(\Gamma) \approx 2.214$,  its positive edge $v_2v_3$ does not satisfy the condition in  Lemma \ref{lem:con}. If we let $\Gamma'=\alpha(\Gamma, v_2v_3)$, then the index $\lambda(\Gamma')=2$   is less than $\lambda(\Gamma')$.
\end{remark}

However, in Lemma \ref{lem:con}, if $uv$ is a cut edge, things are easier.
\begin{corollary}\label{lem:concut}
	Let $uv$ be a   cut  edge of signed graph $\Gamma$,  and $\Gamma'=\alpha(\Gamma, uv)$. We have  $\lambda(\Gamma')\geq \lambda(\Gamma)$.
\end{corollary}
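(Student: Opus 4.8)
The plan is to reduce the statement to the Rayleigh-quotient estimates already contained in the proof of Lemma \ref{lem:con}, using the cut-edge hypothesis twice: once via switching to normalise $\sigma(uv)=+1$, and once via the component structure of $\Gamma-uv$ to control the eigenvector coordinates at $u$ and $v$. Write $\lambda=\lambda(\Gamma)$, let $\Gamma_u,\Gamma_v$ be the components of $\Gamma-uv$ containing $u$ and $v$, and let $N_\Gamma(v)\setminus\{u\}=\{v_1,\dots,v_d\}\subseteq V(\Gamma_v)$. First I would record two uses of interlacing: since $\Gamma_u,\Gamma_v$ are vertex-induced subgraphs of $\Gamma$, Lemma \ref{th:interlacing} (iterated) gives $\lambda\ge\max\{\lambda(\Gamma_u),\lambda(\Gamma_v)\}$; and the very same subgraphs sit inside $\Gamma'$, since one checks at once that $\Gamma'[V(\Gamma_u)]=\Gamma_u$ while $\Gamma'[\{u\}\cup(V(\Gamma_v)\setminus\{v\})]$ is just $\Gamma_v$ with $v$ relabelled as $u$ (the $\alpha$-transform only re-routes the edges at $v$ to $u$). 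Hence $\lambda(\Gamma')\ge\max\{\lambda(\Gamma_u),\lambda(\Gamma_v)\}$ too, so I may assume from now on that $\lambda>\lambda(\Gamma_u)$ and $\lambda>\lambda(\Gamma_v)$, for otherwise the conclusion is immediate.

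Next, switch so that $\sigma(uv)=+1$, take a unit $\lambda$-eigenvector $\mathbf{x}$ of $A(\Gamma)$, and by Lemma \ref{lem:cut} (replacing $\mathbf{x}$ by $-\mathbf{x}$ if necessary) assume $x_u,x_v\ge 0$. Let $\mathbf{x}_u,\mathbf{x}_v$ be the restrictions of $\mathbf{x}$ to $V(\Gamma_u),V(\Gamma_v)$. Because $uv$ is a cut edge, the eigenvalue equations decouple into $(\lambda I-A(\Gamma_u))\mathbf{x}_u=x_v\mathbf{e}_u$ and $(\lambda I-A(\Gamma_v))\mathbf{x}_v=x_u\mathbf{e}_v$, where $\mathbf{e}_u,\mathbf{e}_v$ are the corresponding standard basis vectors. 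The key step — which I expect to be the main obstacle — is to deduce from this that
$$\lambda x_u\ge x_v\qquad\text{and}\qquad\lambda x_v\ge x_u.$$
For the first: since $\lambda>\lambda(\Gamma_u)$, the matrix $M:=\lambda I-A(\Gamma_u)$ is positive definite, so $\mathbf{x}_u=x_vM^{-1}\mathbf{e}_u$ and $x_u=x_v(M^{-1})_{uu}$; by the Schur-complement formula $(M^{-1})_{uu}=1/(M_{uu}-s)$ with $s\ge 0$ (the quantity subtracted off the $u$-diagonal when eliminating the remaining block of the positive-definite $M$), hence $(M^{-1})_{uu}\ge 1/M_{uu}=1/\lambda$, because the diagonal entries of an adjacency matrix vanish. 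Thus $\lambda x_u\ge x_v$, and symmetrically $\lambda x_v\ge x_u$. Note that for a general, non-cut, edge these two estimates can fail — this is exactly why Lemma \ref{lem:con} carries extra hypotheses — and it is the cut-edge assumption that localises $\mathbf{x}_u,\mathbf{x}_v$ to positive-definite systems and so forces them.

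Finally I would feed this into the Rayleigh quotient of $A(\Gamma')$, exactly as in the proof of Lemma \ref{lem:con}. Testing with $\mathbf{x}$ itself gives $\lambda(\Gamma')-\lambda\ge (x_u-x_v)(\lambda x_v-x_u)$, and testing with the vector $\mathbf{z}$ obtained from $\mathbf{x}$ by interchanging its $u$- and $v$-coordinates — which is legitimate because $u$ and $v$ have no common neighbour ($uv$ lies in no triangle), so $\|\mathbf{z}\|=\|\mathbf{x}\|=1$ and the only surviving cross terms are those from $u$ to $N_\Gamma(u)\setminus\{v\}$ — gives $\lambda(\Gamma')-\lambda\ge (x_v-x_u)(\lambda x_u-x_v)$. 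If $x_u\ge x_v$ the first right-hand side is a product of two non-negative factors (using $\lambda x_v\ge x_u$); if $x_u\le x_v$ the second one is (using $\lambda x_u\ge x_v$). Either way $\lambda(\Gamma')\ge\lambda(\Gamma)$, as claimed. The two Rayleigh-quotient computations and the interlacing remarks are routine; the positive-definiteness argument for the displayed inequalities is the one genuinely new ingredient and the heart of the proof.
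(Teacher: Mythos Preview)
Your argument is correct, but the route differs from the paper's at the crucial point. Both proofs reduce to verifying the hypothesis $x_v\le x_u\le\lambda x_v$ of Lemma \ref{lem:con}(1); what differs is how the inequality $x_u\le\lambda x_v$ is obtained. The paper stays entirely within Rayleigh-quotient manipulations: it shows $\sum_{v_i\in N_\Gamma(v)\setminus\{u\}}\sigma(vv_i)x_{v_i}\ge 0$ by the same sign-flipping trick as in Lemma \ref{lem:cut} (negate $\mathbf x$ on the component $U$ of $\Gamma-uv$ containing $v$, minus $v$ itself), and then reads off $x_u\le\lambda x_v$ from the eigenvalue equation at $v$. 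Your proof instead invokes interlacing to isolate the strict case $\lambda>\lambda(\Gamma_v)$, then uses positive-definiteness of $\lambda I-A(\Gamma_v)$ together with a Schur-complement bound on the diagonal of its inverse to get $\lambda x_v\ge x_u$ (and symmetrically $\lambda x_u\ge x_v$). The paper's argument is shorter and more elementary, keeping the whole corollary in the same variational spirit as the surrounding lemmas; your argument is more linear-algebraic and has the mild advantage of producing both inequalities $\lambda x_u\ge x_v$ and $\lambda x_v\ge x_u$ at once, which lets you dispense with the ``without loss of generality $x_u\ge x_v$'' step by testing with the swapped vector $\mathbf z$ instead. A small remark: you could have skipped the second Rayleigh-quotient test altogether and, like the paper, simply invoked Lemma \ref{lem:con}(1) directly once $x_v\le x_u\le\lambda x_v$ is in hand (using the isomorphism $\alpha(\Gamma,uv)\cong\alpha(\Gamma,vu)$ to assume $x_u\ge x_v$).
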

\begin{proof}
Suppose, without loss of generality,    that  $uv$ is positive. Let ${\bf x}$ be an unit eigenvector corresponding to $\lambda(\Gamma)$. 	By Lemma \ref{lem:cut}, we can assume that $x_u\geq x_v\geq 0$. 
	Let  $N_{\Gamma}(v)\setminus \{u\}=\{v_1,\ldots,v_s\}$. 
	The eigenvalue equation leads to the relation 
	$$\lambda(\Gamma)x_v= x_u+\sum\limits_{v_i\in N_{\Gamma }(v)\setminus \{u\}} \sigma(vv_i)x_{v_i}.$$
	 We claim that $\sum\limits_{v_i\in N_{\Gamma }(v)\setminus \{u\}} \sigma(vv_i)x_{v_i}\geq 0$. Otherwise, we write     the component of $\Gamma -uv$      containing  the vertex $v$ as $U$.   Set ${\bf x}=\left( \begin{matrix}
	
	{\bf x}_1\\
	
	{\bf x}_2
	
\end{matrix} \right)$, where $	{\bf x}_1$ is  the subvectors of ${\bf x}$   indexed by vertices in  $U-v$.   Let ${\bf y}=\left( \begin{matrix}

-{\bf x}_1\\

{\bf x}_2

\end{matrix} \right)$, then ${\bf y}^{T}A(\Gamma){\bf y}-{\bf x}^{T}A(\Gamma){\bf x}=-4 x_v\sum\limits_{v_i\in N_{\Gamma }(v)\setminus \{u\}} \sigma(vv_i)x_{v_i} > 0$, which contradicts to the fact that   ${\bf x}$ maximizes the Rayleigh quotient. 

Since  $\sum\limits_{v_i\in N_{\Gamma }(v)\setminus \{u\}} \sigma(vv_i)x_{v_i}\geq 0$,  we have $x_u\leq \lambda (\Gamma ) x_v$. By gluing together this inequality with $x_u\geq x_v$ and Lemma \ref{lem:con}, we  get the assertion. 
	\end{proof}

  The above lemma tell us that if $T$ is a vertex induced subtree of signed graph with root $v$, then  $\alpha $-transform on any edge in $T$ will not decrease the index of the  signed graph. Thus, replacing $T$ with a star ( with   center $v$ and order $\mid V(T)\mid $ ) will not decrease the index as well.

We recall from \cite{Belardo1} the following Schwenk's  formulas
\begin{lemma} \label{lem:Schwenk} Let $v$ be a vertex of signed graph $\Gamma$,  
	$$\Phi(\Gamma,x)=x\Phi(\Gamma-v,x)-\sum\limits_{uv\in E(\Gamma)}\Phi(\Gamma-u-v,x)-2\sum\limits_{C\in \mathcal{C}_v}\sigma(C)\Phi(\Gamma-C,x),$$
	where $ \mathcal{C}_v$ is  the set of signed cycles passing through $v$, and $\Gamma-C$ is the signed	graph obtained from
	$\Gamma$ by deleting   $C$.
\end{lemma}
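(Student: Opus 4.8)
The plan is to expand $\Phi(\Gamma,x)=\det\bigl(xI-A(\Gamma)\bigr)$ by the Leibniz (permutation) formula for determinants and to sort the permutations according to the orbit containing the distinguished vertex $v$. Write $M=xI-A(\Gamma)=(m_{ij})$, so that $m_{ii}=x$, $m_{ij}=-\sigma(v_iv_j)$ when $v_iv_j\in E(\Gamma)$, and $m_{ij}=0$ otherwise. Then
$$\Phi(\Gamma,x)=\sum_{\pi\in S_n}\mathrm{sgn}(\pi)\prod_{i=1}^{n}m_{i\,\pi(i)},$$
and a permutation $\pi$ contributes a nonzero summand only if $v_iv_{\pi(i)}\in E(\Gamma)$ for every non-fixed point $i$; equivalently, the non-fixed part of $\pi$ is a disjoint union of directed cycles, each projecting onto a cycle of the underlying graph $G$ of length at least $2$ (a length-$2$ ``cycle'' being a single edge used in both directions). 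Since both $\mathrm{sgn}(\pi)$ and $\prod_i m_{i\pi(i)}$ factor over the orbits of $\pi$, I would split $\sum_\pi$ into three groups according to the orbit $O$ of $v$.

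First, if $O=\{v\}$, i.e. $\pi(v)=v$, the factor $m_{vv}=x$ splits off and the sum of the remaining products over all permutations of $V(G)\setminus\{v\}$ is exactly $\det\bigl(xI-A(\Gamma-v)\bigr)=\Phi(\Gamma-v,x)$, giving the term $x\,\Phi(\Gamma-v,x)$. Second, if $O=\{v,u\}$ is a $2$-element orbit, then $uv\in E(\Gamma)$, the transposition contributes $\mathrm{sgn}=-1$, and $m_{vu}m_{uv}=\bigl(-\sigma(uv)\bigr)\bigl(-\sigma(vu)\bigr)=\sigma(uv)^2=1$; summing the remaining products over all permutations of $V(G)\setminus\{u,v\}$ gives $\Phi(\Gamma-u-v,x)$, so the total contribution of the $2$-orbits through $v$ is $-\sum_{uv\in E(\Gamma)}\Phi(\Gamma-u-v,x)$.

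Third, if $|O|=k\ge 3$, then $O$ is a directed $k$-cycle whose projection is a cycle $C=u_1u_2\cdots u_ku_1$ of $G$ through $v$. Such a $k$-cycle contributes $(-1)^{k-1}$ to $\mathrm{sgn}(\pi)$, while $\prod_j m_{u_ju_{j+1}}=(-1)^k\prod_j\sigma(u_ju_{j+1})=(-1)^k\sigma(C)$ (indices mod $k$); multiplying, the net factor is $(-1)^{k-1}(-1)^k\sigma(C)=-\sigma(C)$. Because $k\ge 3$, the cycle $C$ can be traversed in two opposite directions, and both give the same value $-\sigma(C)$, since neither the parity $(-1)^k$ nor the edge-sign product $\sigma(C)$ depends on the orientation. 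Summing the remaining products over all permutations of $V(G)\setminus V(C)$ produces $\Phi(\Gamma-C,x)$, so the total over all orbits of size $\ge 3$ is $-2\sum_{C\in\mathcal{C}_v}\sigma(C)\,\Phi(\Gamma-C,x)$. Adding the three contributions yields the stated identity.

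The one delicate point — the step I would write out most carefully — is the sign bookkeeping: one must keep separate (i) the permutation sign $(-1)^{k-1}$ of each $k$-cycle, (ii) the factor $(-1)^k$ coming from the off-diagonal entries $m_{ij}=-\sigma(v_iv_j)$, and (iii) the edge-sign product $\sigma(C)$, and one must correctly account for the factor $2$ that appears for genuine cycles ($k\ge 3$) but is absent for edges ($k=2$), where the two orientations of the $2$-cycle coincide as permutations. Everything else is the routine observation that the restriction of $\pi$ to $V(G)\setminus O$ ranges over all permutations of that set and reassembles into the characteristic polynomial of the corresponding induced signed subgraph.
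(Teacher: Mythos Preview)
Your argument is correct. The paper itself does not supply a proof of this lemma; it simply quotes the identity from \cite{Belardo1} (Belardo, Li~Marzi, Simi\'c), so there is no in-paper proof to compare against. Your Leibniz-expansion approach---classifying permutations by the orbit of $v$ and reading off the three cases (fixed point, transposition with a neighbour, directed cycle of length $\ge 3$)---is precisely the standard combinatorial derivation of Schwenk-type formulas and is in the same spirit as the cited reference, whose title (``Combinatorial approach for computing the characteristic polynomial of a matrix'') already signals this method. The sign bookkeeping you flag as delicate is handled correctly: the $(-1)^{k-1}$ from the cycle parity, the $(-1)^k$ from the $k$ off-diagonal entries $-\sigma(\cdot)$, and the product $\sigma(C)$ combine to $-\sigma(C)$ independently of orientation, and the factor $2$ appears exactly for $k\ge 3$ because only then do the two orientations give distinct permutations.
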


\section{Three classes of signed bicyclic graphs}

A graph $G$ of order $n$ is called a {\it bicyclic graph } if $G$ is
	connected and the number of edges of $G$ is $n+1$. A signed graph whose  underlying graph is a bicyclic graph, we  call it  {\it signed  bicyclic  graph}.

 It is easy to see from the definition that $G$ is a   bicyclic graph 
 if and only if $G$ can be obtained from a tree $T$ (with the same
 order) by adding two new edges to $T$.

 Let $G$ be a bicyclic graph. The {\it base of bicyclic graph}    $G$, denoted by
 $\widehat{G}$, is the (unique) minimal bicyclic subgraph of $G$. If $\Gamma=(G,\sigma)$, then we define $\widehat{\Gamma}=(\widehat{G},\sigma)$ as the {\it base of signed  bicyclic graph}   $\Gamma $. It
 is easy to see that $\widehat{G}$ is the unique bicyclic subgraph of
 $G$  containing no pendant vertices, while $G$ can be obtained from
 $\widehat{G}$ by attaching trees to some vertices of $\widehat{G}$.
 
 It is well-known that there are the following three types of
 bicyclic graphs containing no pendant vertices:

 Let $B(p,q)\ (p\geq q\geq 3)$ be the bicyclic graph obtained from two vertex-disjoint
 cycles $C_p$ and $C_q$ by identifying vertices $u$ of $C_p$ and $v$
 of $C_q$\ (see Fig. 2.1). This type of graph is also known as the {\it infinity graph}.
 
 Let $B(p,\ell ,q)$ be the bicyclic graph obtained from two
 vertex-disjoint cycles $C_p$ and $C_q$ by joining vertices $u$ of
 $C_p$ and $v$ of $C_q$ by a new path $uu_1u_2\cdots u_{\ell -1}v$ with
 length $\ell \ (\ell \ge 1)$\ (see Figure \ref{bpq}). This type of graph is also known as the {\it dumbbell graph}; if the cycles are triangles, it also takes the name of {\it hourglass graph}.
 
 \begin{figure}[!htb] 
 	\begin{center} \hspace{-2cm}
 		\includegraphics[width=12cm]{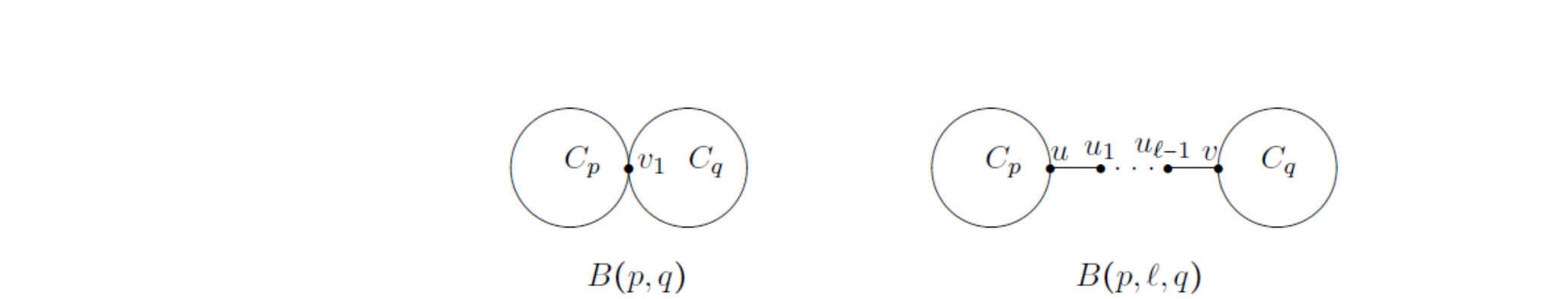}
 		\makeatletter\def\@captype{figure}\makeatother 
 		\caption{ $B(p, q)$ and $B(p, \ell , q)$ }\label{bpq}
 	\end{center}
 \end{figure}

 
 Let $B(P_k,P_{\ell},P_m)\ (1\leq m\leq\min\{k,\ell \})$ be the bicyclic graph
 obtained from three  pairwise internal  disjoint paths form a vertex
 $x$ to a vertex $y$. These three paths are $ xv_1v_2\cdots,\
 v_{k-1}y$ with length $k$, $ xu_1u_2\cdots,\ u_{\ell -1}y$ with length
 $\ell $ and $xw_1w_2\cdots,\ w_{m-1}y$ with length $m$\ (see Figure \ref{bpql}). This type of graph is also known as the {\it $\theta$-graph}.
 \begin{figure}[!htb] 
 	\begin{center} 
 		\includegraphics[width=12cm]{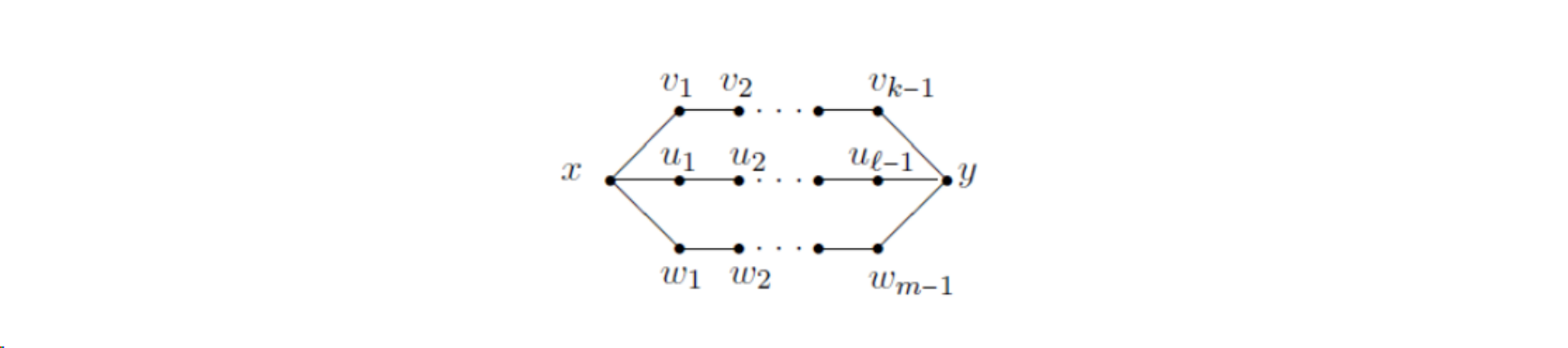}
 		\makeatletter\def\@captype{figure}\makeatother 
 		\vspace{-0.5cm}
 		\caption{ $B(P_k,P_{\ell},P_m)$ }\label{bpql}
 	\end{center}
 \end{figure}

  Accordingly, we denoted
 by $\mathcal{B}_n$ the set of all   unbalanced signed bicyclic graphs of order $n$.
 We are now ready to describe the class of  unbalanced signed bicyclic graphs.

 $\mathcal{B}_n (p,q) =\{\Gamma=(G,\sigma) \mbox{ is \ unbalanced}   \mid \ \widehat{G}=B(p,q)\,\,
 \mbox{ for\ some\ }    p \geq q \geq 3 \}, $
 
 $ \mathcal{B}_n (p,\ell,q) =\{\Gamma=(G,\sigma) \mbox{ is \ unbalanced}  
 \ \mid\  \widehat{G}=B(p,\ell,q), \mbox{ for\ some\ }  p \geq 3,  \, q
 \geq 3 \,\, \mbox{and}  \,\, \ell  \geq 1\}, $

 $\mathcal{B}_n (P_k,P_{\ell},P_m) =\{\Gamma=(G,\sigma) \mbox{ is \ unbalanced}  \ \mid\
 \widehat{G}=B(P_k,P_{\ell},P_m) \mbox{ for\ some\ }  1\leq
 m\leq\min\{k,l\}\}.$
 
 It is easy to see that
 $$\mathcal{B}_n=\mathcal{B}_n (p,q) \, \dot{\cup} \, \mathcal{B}_n (p,\ell,q)\, \dot{\cup}\, \mathcal{B}_n (P_k,P_{\ell},P_m).$$

 \section{The index of unbalanced signed bicyclic graphs with  given order }
 
 In this section,   we deal with the extremal index problems for the class of unbalanced signed bicyclic  graphs with order $n$. We will determine the first five  graphs in  $\mathcal{B}_n$, and  order them  according to their index in decreasing order.
 
 For the unicyclic graphs, there are exactly two
 switching equivalent classes. If a unicyclic signed graph is balanced, by Theorem \ref{thm:swi}, it is switching
 equivalent to one with all edges positive. Otherwise, it is switching equivalent to one with exactly one (arbitrary) negative edge on the cycle\cite{Fan}. For unbalanced bicyclic signed graphs, we also have similar results.
 
 \begin{lemma}\label{lem:neg}
 	If $\Gamma \in \mathcal{B}_n (p,q) \cup \mathcal{B}_n (p,\ell,q)$, then $\Gamma $ is switching equivalent to one with exactly one (arbitrary) negative edge on its  unbalanced cycle. If $\Gamma\in \mathcal{B}_n (P_k,P_{\ell},P_m)$,  then $\Gamma $ is switching equivalent to one with exactly one (arbitrary) negative edge on its base.
 \end{lemma}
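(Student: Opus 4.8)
The plan is to deduce everything from two already-available facts: (a) by Theorem~\ref{thm:swi} a signed tree is balanced, hence switching equivalent to the all-positive tree, so given \emph{any} spanning tree $T$ of the underlying graph there is a switching function $\theta$ with $\sigma^{\theta}\equiv +1$ on $T$; and (b) switching preserves the sign of every cycle. Together these say: for an edge $e\notin T$, in $\Gamma^{\theta}$ the fundamental cycle $C_{e}$ of $e$ (the unique cycle of $T+e$) has all of its edges positive except possibly $e$, whence $\sigma^{\theta}(e)=\sigma^{\theta}(C_{e})=\sigma(C_{e})$. So the entire argument becomes: pick the spanning tree so that the two non-tree edges land where we want them, then read off their signs as cycle signs. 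Since the edges of the trees hung on the base $\widehat{G}$ are bridges, we take those positive throughout and may argue on $\widehat{G}$.

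First take $\Gamma\in\mathcal{B}_{n}(p,q)\cup\mathcal{B}_{n}(p,\ell,q)$. Here $\widehat{G}$ has exactly the two cycles $C_{p}$ and $C_{q}$, which span the cycle space and share no edge. Fix a prescribed edge $e_{p}\in C_{p}$ and an arbitrary edge $e_{q}\in C_{q}$: then $\widehat{G}-e_{p}-e_{q}$ is still connected (each deleted edge comes from a different cycle, and deleting a cycle-edge never disconnects) and has the right number of edges, so together with the pendant trees it is a spanning tree $T$; switching $T$ to all-positive yields $\sigma^{\theta}(e_{p})=\sigma(C_{p})$, $\sigma^{\theta}(e_{q})=\sigma(C_{q})$, and every other edge positive. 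Since $\Gamma$ is unbalanced, $\sigma(C_{p})=-1$ or $\sigma(C_{q})=-1$; in the main case, when exactly one of the two cycles is negative, say $C_{p}$, we get a representative whose unique negative edge is $e_{p}$, placed at an arbitrary position on the unbalanced cycle $C_{p}$. (If both cycles happen to be negative, the very same construction concentrates the negativity to one edge on each of $C_{p},C_{q}$, the natural reading of the statement in that degenerate situation.)

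Now take $\Gamma\in\mathcal{B}_{n}(P_{k},P_{\ell},P_{m})$, whose base consists of three internally disjoint paths $\pi_{1},\pi_{2},\pi_{3}$ joining $x$ and $y$, with cycles $C_{ij}=\pi_{i}\cup\pi_{j}$. The preliminary observation is that every edge of $\widehat{G}$ lies on exactly two of $C_{12},C_{13},C_{23}$, so $\sigma(C_{12})\,\sigma(C_{13})\,\sigma(C_{23})=+1$; hence an unbalanced $\Gamma$ has \emph{exactly two} negative cycles, and these two share exactly one path, say $\pi_{\rho}$. Fix a prescribed edge $e\in\pi_{\rho}$ and any edge $e'$ on one of the other two paths; then $\widehat{G}-e-e'$, together with the pendant trees, is a spanning tree $T$. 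Completing $\pi_{\rho}$ by the path that avoids $e'$ shows that the fundamental cycle of $e$ is one of the two negative cycles, while the fundamental cycle of $e'$ is the cycle built from the two paths $\neq\pi_{\rho}$, which is the positive one. Switching $T$ to all-positive therefore leaves $e$ as the only negative edge of $\Gamma^{\theta}$, and it lies on the base; since $e$ was an arbitrary edge of $\pi_{\rho}$, this is exactly the assertion.

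I expect the bulk of the work to be routine bookkeeping: verifying in each base type that deleting the two chosen non-tree edges keeps the graph connected (immediate, as they belong to different members of a cycle basis) and identifying the fundamental cycles correctly. The one genuinely new ingredient, and the step I would be most careful with, is the parity identity $\sigma(C_{12})\sigma(C_{13})\sigma(C_{23})=+1$ for the $\theta$-graph: it is what forbids "exactly one negative cycle" and pins down on which path the single negative edge must sit, thereby justifying the phrase "on its base". Phrasing the corresponding statement precisely for the infinity/dumbbell case, where the two cycles may be negative independently, is the other point that needs care.
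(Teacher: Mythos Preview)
Your argument is correct and rests on the same mechanism as the paper: delete two non-tree edges to obtain a spanning tree, switch the tree to all-positive via Theorem~\ref{thm:swi}, and recover the signs of the deleted edges as the signs of their fundamental cycles. For $\mathcal{B}_n(p,q)\cup\mathcal{B}_n(p,\ell,q)$ the two proofs are identical in substance.

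For $\mathcal{B}_n(P_k,P_\ell,P_m)$ there is a small tactical difference. The paper deletes two of the three base edges incident to a fixed $3$-degree vertex $x$; if both come back negative it performs one extra switch at $x$ to push the negativity onto the third edge. You instead first prove the parity identity $\sigma(C_{12})\sigma(C_{13})\sigma(C_{23})=+1$, use it to locate the path $\pi_\rho$ common to the two negative cycles, and delete an edge on $\pi_\rho$ together with an edge on another path; this avoids the extra case split. Your route buys a little more: it explains precisely \emph{which} edges of the base can carry the single negative sign (exactly those on $\pi_\rho$), sharpening the word ``arbitrary'' in the statement. You also correctly flag that in the $\infty$/dumbbell case the statement is imprecise when both cycles are unbalanced; the paper's proof shares this imprecision.
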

 \begin{proof}
 	 If $\Gamma \in \mathcal{B}_n (p,q) \cup \mathcal{B}_n (p,\ell,q)$, 
 	let $e_1$ and $e_2$ be two edges of $\Gamma$ in different cycles, then $\Gamma-e_1-e_2$ is a tree, which is   balanced. So by Theorem \ref{thm:swi},
 	there exists a sign function $\theta$ such that $(\Gamma-e_1-e_2) ^{\theta }$ consisting of positive edges. Returning to the graph $\Gamma ^{\theta}$, the edges $e_1$ and $e_2$ must  have a negative sign as switching does not change the sign of a cycle.

 	If $\Gamma\in \mathcal{B}_n (P_k,P_{\ell},P_m)$, let $e_1$, $e_2$ and $e_3$ be the three edges of $\Gamma$ which are incident to a common 3-degree vertex in the base. Similarly, $(\Gamma-e_1-e_2) ^{\theta }$ consisting of positive edges. Returning to the graph $\Gamma ^{\theta}$, if exactly one of    $e_1$ and $e_2$ is negative,  the result follows.  If both  $e_1$ and $e_2$ are negative, then $\Gamma$ is switching equivalent to the signed graph which has the same underlying graph as $\Gamma$, and  just has one negative edge $e_3$.
 \end{proof}

 The following lemma is a starting point of our discussions.
 \begin{lemma}\label{lem:albi}
Let $u_1u_2u_3u_4$ be a path in   signed bicyclic graph $\Gamma$, and  $ d_{\hat{\Gamma} }(u_2)=d_{\hat{\Gamma} }(u_3)=2$. Let ${\bf x}$ be an eigenvector corresponding to the index  $\lambda(\Gamma)$ and $\Gamma'=\alpha(\Gamma,u_2u_3)$. If  $x_{u_2}\geq 0,\ x_{u_3}\geq 0$,  $ \sigma(u_1u_2)x_{u_1}\geq0$ and $ \sigma(u_3u_4)x_{u_4}\geq0$,   then   $\lambda(\Gamma')\geq \lambda(\Gamma)$.
\end{lemma}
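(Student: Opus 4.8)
The plan is to reduce this to Lemma~\ref{lem:con} by verifying that the eigenvector coordinates at $u_2$ satisfy the hypothesis there. Since $d_{\hat\Gamma}(u_2)=d_{\hat\Gamma}(u_3)=2$, the edge $u_2u_3$ is not in any triangle of $\Gamma$ and is not a pendant edge, so the $\alpha$-transform $\alpha(\Gamma,u_2u_3)$ is well-defined; note also that $N_\Gamma(u_2)\setminus\{u_3\}$ may contain vertices besides $u_1$ (pendant trees hanging off $u_2$), but by the Corollary and Remark~\ref{rem:tree} the hypothesis $\sigma(u_1u_2)x_{u_1}\ge 0$ lets us assume all such pendant-tree edges at $u_2$ are positive with non-negative coordinates; the same applies at $u_3$ via $\sigma(u_3u_4)x_{u_4}\ge0$. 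First I would handle the sign of $\sigma(u_2u_3)$. After switching (which changes neither the spectrum nor the signs of cycles), I may assume $\sigma(u_2u_3)>0$: indeed $u_2u_3$ lies on at most one cycle of $\widehat\Gamma$ (it is a path edge, so it lies on the cycle only in the $B(P_k,P_\ell,P_m)$ or dumbbell/infinity cases where $u_2u_3$ is an internal path edge — in all cases a single switching or the bridge-irrelevance remark after Theorem~\ref{thm:swi} makes it positive while keeping the unbalanced structure elsewhere).

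The crux is then to show $x_{u_3}\le x_{u_2}\le \lambda(\Gamma)x_{u_3}$ (or swap the roles of $u_2,u_3$), so that case (1) of Lemma~\ref{lem:con} applies with $u=u_2$, $v=u_3$. The right-hand inequality is the eigenvalue equation at $u_3$: writing $\lambda x_{u_3}=\sigma(u_2u_3)x_{u_2}+\sum_{w\in N_\Gamma(u_3)\setminus\{u_2\}}\sigma(u_3 w)x_w = x_{u_2}+\big(\sigma(u_3u_4)x_{u_4}+\text{(pendant-tree terms at }u_3)\big)$, and by the hypotheses $\sigma(u_3u_4)x_{u_4}\ge0$ together with the pendant-tree terms being $\ge0$ (Remark~\ref{rem:tree}), we get $\lambda x_{u_3}\ge x_{u_2}$. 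Symmetrically, the eigenvalue equation at $u_2$ gives $\lambda x_{u_2}\ge x_{u_3}$, which is weaker than what I want; the genuine obstacle is the remaining inequality $x_{u_2}\ge x_{u_3}$ (equivalently $x_{u_3}\ge x_{u_2}$) — i.e. deciding which of $u_2,u_3$ is the larger, since Lemma~\ref{lem:con}(1) needs $x_v\le x_u$, not just $x_u\le\lambda x_v$.

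To resolve this I would argue by symmetry: both $\lambda x_{u_2}\ge x_{u_3}\ge0$ and $\lambda x_{u_3}\ge x_{u_2}\ge0$ hold, and without loss of generality $x_{u_2}\ge x_{u_3}$ (otherwise relabel, using that the hypotheses on $u_1,u_4$ are symmetric in the obvious sense — or rather, note that $\alpha(\Gamma,u_2u_3)$ moving the $u_3$-neighbours to $u_2$ versus moving the $u_2$-neighbours to $u_3$ both produce graphs comparable to $\Gamma$ under the two halves of Lemma~\ref{lem:con}'s derivation, equations (3) and (5)). Concretely: if $x_{u_2}\ge x_{u_3}\ge0$, then from $\lambda x_{u_3}\ge x_{u_2}$ we have exactly $x_{u_3}\le x_{u_2}\le\lambda x_{u_3}$, so Lemma~\ref{lem:con}(1) gives $\lambda(\Gamma')\ge\lambda(\Gamma)$ directly (here $\Gamma'$ moves the neighbours of $u_3$ other than $u_2$ onto $u_2$; the other orientation of the $\alpha$-transform is the same graph up to relabelling since $\alpha$ contracts one side of the edge into the other). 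If instead $x_{u_3}\ge x_{u_2}$, apply the same with $u_2,u_3$ interchanged. In either case the conclusion follows. I would double-check that the inequality chain $x_{u_2}\le\lambda x_{u_3}$ used is the one matching the direction in which $\alpha$ actually moves edges in the definition of $\Gamma'=\alpha(\Gamma,u_2u_3)$ (neighbours of $u_3$ move to $u_2$ if we read $v=u_3$), so that equation~(3) in the proof of Lemma~\ref{lem:con} is the relevant one; this bookkeeping, plus the careful invocation of Remark~\ref{rem:tree} to kill the pendant-tree contributions with the correct sign, is the only delicate part.
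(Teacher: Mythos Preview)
Your approach is essentially the paper's: reduce to Lemma~\ref{lem:con}, use the eigenvalue equation at whichever of $u_2,u_3$ has the smaller coordinate, and control the remaining neighbour terms via the cut-edge inequality. The paper writes this more tersely but the logic is the same; in particular your observation that $\alpha(\Gamma,u_2u_3)$ and $\alpha(\Gamma,u_3u_2)$ are isomorphic is exactly what makes the ``without loss of generality $x_{u_2}\ge x_{u_3}$'' step legitimate.

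There is, however, one genuine slip in your handling of the case $\sigma(u_2u_3)<0$. You propose to switch so as to make $u_2u_3$ positive ``while keeping the unbalanced structure elsewhere''. Switching does preserve the spectrum and the cycle signs, but it also replaces the eigenvector ${\bf x}$ by $D^\theta{\bf x}$; any switch that flips the sign of $u_2u_3$ must negate at least one of $x_{u_2},x_{u_3}$ (or some $x_{u_1},x_{u_4}$), and then the hypotheses $x_{u_2}\ge 0$, $x_{u_3}\ge 0$, $\sigma(u_1u_2)x_{u_1}\ge 0$, $\sigma(u_3u_4)x_{u_4}\ge 0$ need not survive. The paper's fix is much simpler than switching: when $\sigma(u_2u_3)<0$ and $x_{u_2},x_{u_3}\ge 0$, condition~(2) of Lemma~\ref{lem:con} applies \emph{directly}, so only the positive-edge case needs the eigenvalue-equation argument. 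Also, for the pendant-tree terms you invoke Remark~\ref{rem:tree}, which again relies on switching; the paper instead appeals to Lemma~\ref{lem:cut} (each such edge at $u_2$ or $u_3$ is a cut edge since $d_{\hat\Gamma}(u_2)=d_{\hat\Gamma}(u_3)=2$), which gives $\sigma(u_2v_i)x_{v_i}\ge 0$ without altering ${\bf x}$. With those two adjustments your write-up coincides with the paper's proof.
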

\begin{proof} From Lemma \ref{lem:con}, it suffices to consider the case that $u_2u_3$ is a positive edge.
	
If $x_{u_2}\leq x_{u_3}$,  the eigenvalue equation for the index $\lambda(\Gamma)$, when restricted to the vertex $u_2$ becomes				 $$\lambda(\Gamma)x_{u_2}=\sigma(u_1u_2)x_{u_1}+\sum\limits_{v_i\in N_{\Gamma}(u_2)\setminus \{u_1,u_3\}} \sigma(u_2v_i)x_{v_i}+ x_{u_3}.$$
The fact that $\Gamma$ is a signed bicyclic graph and 
$ d_{\hat{\Gamma} }(u_2) =2$ imply that $u_2v_i$ is a cut edge, and then $\sigma(u_2v_i)x_{v_i}\geq 0$  follows from Lemma \ref{lem:cut}. Hence, $x_{u_3}\leq \lambda(\Gamma)x_{u_2} $. By Lemma \ref{lem:con}, we can get the  desired result.

Similarly, we can prove the assertion when   $x_{u_2}\geq x_{u_3}$.
\end{proof}
	
For convenience, we use $\Gamma+\widetilde{uv}$ (where $uv\not\in E(\Gamma)$) to denote the signed graph obtained from $\Gamma$ by adding a new negative edge $uv$. 
 
\begin{lemma}\label{lem:bibase} Let $\Gamma$ be a $\infty$-type unbalanced signed  bicyclic graph,  and $\hat{\Gamma}\not\in \mathcal{B}_n(3,3) $,  then there is some $\infty$-type unbalanced signed  bicyclic  graph $\Gamma'$ such that $|V(\hat{\Gamma'})|<|V(\hat{\Gamma})|$ and $\lambda(\Gamma')\geq \lambda(\Gamma)$.
\end{lemma}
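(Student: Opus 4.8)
The plan is to reduce the size of the base $\hat\Gamma$ one cycle-edge at a time, using the $\alpha$-transform lemmas already established. Recall that a $\infty$-type base is $B(p,q)$, two cycles $C_p$ and $C_q$ sharing a single vertex $w$. Since $\hat\Gamma\neq B(3,3)$ we may assume $p\geq 4$. Because $\Gamma$ is unbalanced, by Lemma \ref{lem:neg} we may switch so that $\Gamma$ has exactly one negative edge, lying on its unbalanced cycle; we arrange the notation so that this negative edge is not one of the two edges we are about to move, which is possible since $p\geq 4$ gives us several edges on $C_p$ to choose from. Fix an eigenvector ${\bf x}$ for $\lambda(\Gamma)$; after switching on the pendant trees hanging off the base (Remark \ref{rem:tree} / Corollary after Lemma \ref{lem:cut}) we may assume every cut edge $e=ab$ satisfies $\sigma(ab)x_ax_b\geq 0$, and in particular we can sign the trees positively.

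First I would locate on the longer cycle $C_p$ a path $u_1u_2u_3u_4$ of four consecutive vertices with $d_{\hat\Gamma}(u_2)=d_{\hat\Gamma}(u_3)=2$; this exists because $p\geq 4$ and the only vertex of $C_p$ with base-degree $>2$ is $w$. I then want to apply Lemma \ref{lem:albi} to the positive edge $u_2u_3$ (WLOG positive, since the single negative edge was placed elsewhere). To invoke Lemma \ref{lem:albi} I must check $x_{u_2}\geq0$, $x_{u_3}\geq0$, $\sigma(u_1u_2)x_{u_1}\geq0$ and $\sigma(u_3u_4)x_{u_4}\geq0$. The sign conditions on the neighbouring edges are not automatic because $u_1u_2$ and $u_3u_4$ lie on the cycle, not on a tree. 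The standard fix is to first handle the \emph{sign pattern of ${\bf x}$ on the cycle}: by a switching argument (choosing $\theta$ to flip the offending part of $C_p$) one can arrange that all vertices of the path $u_1u_2u_3u_4$, and indeed all of $C_p$ except possibly across the negative edge, carry non-negative ${\bf x}$-coordinates while the edges among them stay positive. Concretely, if some $x_{u_i}<0$ on this stretch one uses the same Rayleigh-quotient flip as in the proof of Lemma \ref{lem:cut} restricted to a sub-path, contradicting maximality unless the relevant products are already non-negative. Once the hypotheses hold, Lemma \ref{lem:albi} gives a graph $\Gamma_1=\alpha(\Gamma,u_2u_3)$ with $\lambda(\Gamma_1)\geq\lambda(\Gamma)$; the $\alpha$-transform contracts $C_p$ into a shorter cycle $C_{p-1}$ (the two neighbours of $u_2$ other than $u_3$ get reattached to $u_3$, deleting one cycle edge and turning $u_2$ into a pendant-tree vertex), so $|V(\hat\Gamma_1)|\leq |V(\hat\Gamma)|-1<|V(\hat\Gamma)|$, and $\Gamma_1$ is still $\infty$-type and unbalanced (the unbalanced cycle $C_q$, resp. the untouched part of $C_p$, retains its negative edge; $\alpha$-transform preserves signs of cycles by the lemma's remark). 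Setting $\Gamma'=\Gamma_1$ finishes the proof.

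Alternatively, and more robustly, I would phrase the reduction so that it does not even need the four-vertex path: if $p\geq 4$ there is an edge $e=u_2u_3$ on $C_p$ with both endpoints of base-degree $2$, and one shows directly via Lemma \ref{lem:con}(1) that $x_{u_3}\leq\lambda(\Gamma)x_{u_2}$ and $x_{u_2}\leq x_{u_3}$ (or the reverse) by feeding the eigenvalue equations at $u_2$ and $u_3$ the fact that their off-cycle neighbours sit on trees (cut edges, so Lemma \ref{lem:cut} applies) — this is exactly the internal computation of Lemma \ref{lem:albi}, so invoking that lemma is cleanest.

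The main obstacle is the sign bookkeeping: Lemma \ref{lem:albi} requires the two ${\bf x}$-coordinates on the moved edge to be non-negative and the two flanking edge-times-coordinate products to be non-negative, and on a cycle neither is free — the eigenvector need not be non-negative along $C_p$. The resolution is the switching reduction described above (place the lone negative edge away from the chosen path, then switch the pendant trees and, if necessary, a contiguous arc of $C_p$ so that the relevant coordinates and products are non-negative), combined with the observation that switching changes neither the eigenvalues nor the base size nor the balance type, so it is harmless. A secondary, minor point to verify is that after the $\alpha$-transform the resulting graph is genuinely still of $\infty$-type with a strictly smaller base and is still unbalanced — this is immediate because the shared vertex $w$ and the cycle $C_q$ are untouched and $C_q$ (or whichever cycle carried the negative edge) still contains its negative edge.
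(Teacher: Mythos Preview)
Your proposal has a real gap at the step you yourself flag as ``the main obstacle.'' You want to apply Lemma~\ref{lem:albi} on a $4$-path $u_1u_2u_3u_4$ on $C_p$, and you claim that by switching (and a Rayleigh-quotient flip ``restricted to a sub-path'') one can always arrange $x_{u_2},x_{u_3}\ge 0$ together with $\sigma(u_1u_2)x_{u_1}\ge 0$ and $\sigma(u_3u_4)x_{u_4}\ge 0$. But the quantities $\sigma(u_iu_{i+1})x_{u_i}x_{u_{i+1}}$ are \emph{switching invariants}: if you switch at $v$, both $x_v$ and every $\sigma(vw)$ change sign, so each such product is unchanged. Hence if, say, $\sigma(u_1u_2)x_{u_1}x_{u_2}<0$ in the original graph, no switching will ever make the first and third bullet of Lemma~\ref{lem:albi} hold simultaneously (with $x_{u_2}>0$). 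The Lemma~\ref{lem:cut} flip argument does not rescue this: that argument works for a \emph{cut} edge because flipping one side changes exactly one edge term in the Rayleigh quotient; on a cycle, flipping a contiguous arc changes two edge terms, so a single negative product yields no contradiction. In short, switching cannot manufacture the hypotheses of Lemma~\ref{lem:albi} on an arbitrary cycle path.

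This is exactly where the paper's proof diverges from your plan. The paper does \emph{not} insist on using the $\alpha$-transform in all cases. It separates two situations on the (longer, unbalanced) cycle $u_1u_2\cdots u_{g_1}$ with $u_1u_2$ negative and $x_{u_1}\ge 0$: if the entries $x_{u_3},\ldots,x_{u_{g_1}}$ are all of one sign, Lemma~\ref{lem:albi} applies on some interior edge; but if the signs are mixed, the paper locates an edge $u_iu_{i+1}$ at a sign change and performs a \emph{different} reduction, namely the edge move $\Gamma'=\Gamma-u_iu_{i+1}+u_1u_i$ (or $+\,\widetilde{u_1u_3}$), which directly shortens the unbalanced cycle and satisfies $\lambda(\Gamma')\ge\lambda(\Gamma)$ by a bare Rayleigh-quotient comparison. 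The remaining special patterns ($g_1=4$ with one sign block, and the case $g_1=3$ where one instead shrinks the balanced cycle) are handled separately. Your write-up is missing this edge-move ingredient; without it the reduction does not go through when the eigenvector changes sign along the cycle. A second, smaller point: you should distinguish whether the longer cycle is the unbalanced one (when $g_1\ge 4$) or the balanced one (when the unbalanced cycle is a triangle), since the argument and the cycle you shrink differ in the two cases.
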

\begin{proof} 
By Lemma \ref{lem:neg}, we can assume     that there is exactly one negative edge in an unbalanced cycle, and all edges in balanced cycle are positive.
	
	 Let  $u_1u_2\ldots u_{g_1}$ be the   unbalanced  cycle of $\Gamma $ with larger length, $u_1u_2$  be its unique negative edge,     and again ${\bf x}$ be an unit eigenvector corresponding to $\lambda(\Gamma)$.  Without loss of generality, we assume $x_{u_1}\geq 0$. 
	
	 If $g_1=3$. Let $u_1u'_2\ldots u'_{g_2}\ (g_2\geq 4)$ be another cycle of $\Gamma $, note that $u_1u_2u_3$ is the unbalanced cycle with larger length, and $\hat{\Gamma}\not\in \mathcal{B}_n(3,3) $, we find that  $u_1u'_2\ldots u'_{g_2}$  is balanced. We claim that    the subvector ${\bf x}_1$ of ${\bf x}$ indexed by vertices in  the cycle $u_1u'_2\ldots u'_{g_2}$ is nonnegative. Otherwise, let ${\bf y}$ be the vector obtained from ${\bf x}$ by replacing all negative entries in ${\bf x}_1$ with their absolute, then ${\bf y}^{T}A(\Gamma ){\bf y}\geq {\bf x}^{T}A(\Gamma ){\bf x}$, with equality if and only if ${\bf y}$ is also an eigenvector of $\lambda(\Gamma)$. Then we can either get  the claim (by choosing ${\bf x} $ as ${\bf y}$) or a contradiction (contradicts to the fact that  ${\bf x}^{T}A(\Gamma ){\bf x}$ maximizes the Rayleigh quotient). Note that $g_2\geq 4$, we can get the desired $\Gamma'$ by using $\alpha$-transform on the edge $u'_2u'_3$. Therefore, in the next, we assume that $g_1\geq 4$.

	If all non-zero elements in $\{x_{u_3},\ x_{u_4},\ldots, x_{u_{g_1}}\}$  have the same sign, we can get    the desired unbalanced signed graph by Lemma \ref{lem:albi}. Now we consider the case that $\{x_{u_3},\ldots, x_{u_{g_1}}\}$ have different signs.
	
If $x_{u_2}\geq 0,\ x_{u_{3}}\leq 0$, then  $\Gamma'=\Gamma-u_2u_{3}+\widetilde{u_1u_3}$ is the desired unbalanced signed graph with unbalanced cycle $u_1u_3\ldots u_{g_1}$.	If there is some edge $u_iu_{i+1}$, where $3\leq i \leq g_1-1$,  such that $x_{u_i}\geq 0,\ x_{u_{i+1}}\leq 0$, then  $\Gamma'=\Gamma-u_iu_{i+1}+u_1u_{i}$ is the desired unbalanced signed graph with unbalanced cycle $u_1u_2\ldots u_{i}$.   

To complete the proof, it suffices to consider the case that  there is some $3\leq s \leq g_1$ such that $x_{u_2}\leq 0,\ldots, x_{u_s}\leq 0$ and $x_{u_{s+1}}\geq 0,\ldots, x_{u_{g_1}}\geq 0$. If $g_1\geq 5$, as the larger of  $s-1$ and $g-(s-1) $ is at least half of $g_1$ (which is equal to or greater than  3),  so we can get the desired $\Gamma'$ by Lemma \ref{lem:albi}. It remains to consider the case that $g_1=4$ and  $x_{u_2}\leq 0, \  x_{u_3}\leq 0,\  x_{u_4}\geq 0$.  By using the  switching equivalent, we can get a signed graph with all non-negative entries corresponding to $\lambda(\Gamma)$. By  using Lemma \ref{lem:albi} again, we can get the desired result.
\end{proof}

\begin{lemma}\label{lem:tribase} Let $\Gamma$ be a $\theta$-type unbalanced  signed bicyclic   graph,  and $\hat{\Gamma}\not\in \mathcal{B}_n(P_1,P_2,P_2) $,  then there is some $\theta$-type unbalanced  signed  bicyclic   graph $\Gamma'$ such that $|V(\hat{\Gamma'})|<|V(\hat{\Gamma})|$ and $\lambda(\Gamma')\geq \lambda(\Gamma)$.
\end{lemma}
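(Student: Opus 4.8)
The plan is to imitate the proof of Lemma~\ref{lem:bibase}, with the two cycles of the $\infty$-type base replaced by the two degree-$3$ vertices $x,y$ of the $\theta$-type base and the three internally disjoint $x$--$y$ paths $P_k,P_\ell,P_m$. By Lemma~\ref{lem:neg} I may assume $\Gamma$ has exactly one negative edge, lying on one of these three paths, with all other edges positive; and by Corollary~\ref{lem:concut} (replacing each attached tree by a star) I may assume the attached trees are stars. Let $C$ be the balanced cycle of $\Gamma$, namely the union of the two of $P_k,P_\ell,P_m$ that avoid the negative edge; every edge of $C$ is positive. Fix a unit eigenvector ${\bf x}$ for $\lambda(\Gamma)$ and, replacing it by $-{\bf x}$ if needed, assume $x_x\ge0$.

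The main case is when one of the two paths forming $C$, say $P^*=x\,u_1\cdots u_{t-1}\,y$, has length $t\ge3$ and, in addition, $x_y\ge0$. Since $P^*$ is all-positive and meets the rest of $\Gamma$ only at $x$ and $y$, the Rayleigh-quotient/absolute-value step of Lemma~\ref{lem:bibase} applies on $P^*$ (its two boundary terms being $x_x(|x_{u_1}|-x_{u_1})\ge0$ and $x_y(|x_{u_{t-1}}|-x_{u_{t-1}})\ge0$), so I may assume $x_{u_i}\ge0$ for all $i$. Then Lemma~\ref{lem:albi}, applied to the path $x\,u_1\,u_2\,u_3$ (or $x\,u_1\,u_2\,y$ when $t=3$) and the edge $u_1u_2$, gives $\lambda(\Gamma')\ge\lambda(\Gamma)$ for $\Gamma'=\alpha(\Gamma,u_1u_2)$; and $\Gamma'$ is again an unbalanced $\theta$-type graph whose base is $\widehat\Gamma$ with one vertex of $P^*$ removed from the base, so $|V(\widehat{\Gamma'})|<|V(\widehat\Gamma)|$.

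It remains to treat (i) the case $x_y<0$, and (ii) the case where both paths forming $C$ have length $\le2$. In case~(ii), since $\widehat\Gamma\ne B(P_1,P_2,P_2)$ and $\widehat\Gamma$ is simple, either $\widehat\Gamma=B(P_2,P_2,P_2)=K_{2,3}$, or the negative edge lies on the unique long path $P_k$ ($k\ge3$) while the other two paths have length $\le2$. For $\widehat\Gamma=K_{2,3}$, with negative edge $xa$ and $C=x\,b\,y\,c\,x$, I would use the eigenvalue equation at $b$ to check that the positive edge $xb$ satisfies condition~(1) of Lemma~\ref{lem:con}, i.e. $x_b\le x_x\le\lambda(\Gamma)x_b$ (using $\lambda(\Gamma)\ge\sqrt6$), so that $\Gamma'=\alpha(\Gamma,xb)=\Gamma-by+xy$, with the new edge keeping the sign of $by$, is an unbalanced $B(P_1,P_2,P_2)$-based graph with $\lambda(\Gamma')\ge\lambda(\Gamma)$. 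For case~(i) and the remaining part of~(ii) I would run a folding argument along the relevant path $P$ (the long path of $C$ in~(i), the path $P_k$ in~(ii)): if ${\bf x}$ changes sign across some internal edge $u_iu_{i+1}$ of $P$, I delete it and reattach the appropriate endpoint to $x$ or to $y$ --- the choice dictated both by the sign condition that makes the Rayleigh difference nonnegative, in the spirit of Lemma~\ref{lem:gra1}, and by keeping the negative edge on a cycle --- which preserves the two degree-$3$ vertices and hence keeps $\Gamma'$ of $\theta$-type with a strictly smaller base; if ${\bf x}$ does not change sign on the internal vertices of $P$, then a switching makes those coordinates nonnegative and Lemma~\ref{lem:albi} again applies to an internal edge of $P$.

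The hard part is the sign bookkeeping at the two branch vertices, which has no counterpart in the $\infty$-type case: only one of $x_x,x_y$ can be normalised by a global sign change, yet the sign of the other controls whether the absolute-value reduction and Lemma~\ref{lem:albi} apply at the end of $P^*$ incident to $y$. In the folding step the delicate task is to choose the reattachment so that, simultaneously, the Rayleigh-quotient difference is visibly $\ge0$ from the sign information actually available, the resulting graph is still unbalanced (the negative edge still lies on some cycle), and it is still of $\theta$-type (both branch vertices survive); reconciling these three demands is what forces the case analysis.
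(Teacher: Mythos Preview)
Your toolkit matches the paper's, but you miss the one move that makes the argument short: by Lemma~\ref{lem:neg} the unique negative edge in the base may be chosen \emph{incident to a branch vertex}. The paper lets $u_1$ be a degree-$3$ vertex of $\hat\Gamma$, takes $u_1u_2$ to be the negative edge, normalises $x_{u_1}\ge0$, and splits only on the sign of $x_{u_2}$. If $x_{u_2}\ge0$ then, since every other edge is positive and both endpoints of the sole negative edge carry nonnegative coordinates, the absolute-value replacement of Lemma~\ref{lem:bibase} lets one take ${\bf x}\ge0$ globally, after which any $\alpha$-transform on the base shrinks it. If $x_{u_2}<0$, the paper walks the longest $u_1$--$u_2$ path in $\hat\Gamma$ (which threads through the second branch vertex): either a sign change occurs on an internal edge $u_i'u_{i+1}'$, in which case one deletes it and adds a (positive or negative, as dictated by the direction of the sign change) edge $u_2u_i'$, or all internal signs agree and Lemma~\ref{lem:albi} applies to $u_2'u_3'$. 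The sign at the second branch vertex is never examined on its own, so the ``hard part'' you identify simply does not arise.

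Because you leave the negative edge in a generic position, you are forced into a three-way split and do not complete two of the branches. In the $K_{2,3}$ sub-case you assert $x_b\le x_x\le\lambda(\Gamma)x_b$ with only ``$\lambda(\Gamma)\ge\sqrt6$'' as justification; the eigenvalue equation at $b$ does give $x_x\le\lambda(\Gamma)x_b$ once ${\bf x}\ge0$, but the lower inequality $x_b\le x_x$ can fail (attach many pendants at $b$), so condition~(1) of Lemma~\ref{lem:con} is not available as written. In case~(i) your folding sketch reattaches the cut endpoint to $x$ or to $y$, but you do not check that the result is still $\theta$-type with strictly smaller base while keeping the Rayleigh difference nonnegative; the paper's choice to reattach to $u_2$ is exactly what makes this bookkeeping automatic, since $u_2$ then becomes the new second branch vertex. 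The cleanest repair is not to push your case analysis through but to adopt the paper's placement of the negative edge at a branch vertex, after which your main-case argument (which is fine) already covers everything.
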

\begin{proof} 
Suppose, without loss of generality,    that there is just one negative edge in the base.

Let $u_1$ be one of the 3-degree vertices of $\hat{\Gamma}$,     $u_1u_2$  be the unique negative edge. Again  let  ${\bf x}$ be an unit eigenvector corresponding to $\lambda(\Gamma)$ with $x_{u_1}\geq 0$. 

If $x_{u_2}\geq 0$, similar to the proof of the case $g_1=3$ in Lemma \ref{lem:bibase},  $\bf{x}$ is nonnegative,  we can get the desired $\Gamma'$ by using $\alpha$-transform.

Consequently, if $x_{u_2}< 0$. Let $u_1u_2'\ldots u_p'u_2$ be the longest path from $u_1$ to $u_2$. If there is some edge $u_i'u_{i+1}'$ such that $x_{u_i'}\leq 0$,  $x_{u_{i+1}'}\geq 0$, then $\Gamma'=\Gamma-u_i'u_{i+1}'+u_2u_i'$ is the desired signed graph. If there is some edge $u_i'u_{i+1}'$ such that $x_{u_i'}\geq 0$,  $x_{u_{i+1}'}\leq 0$, then $\Gamma'=\Gamma-u_i'u'_{i+1}+ \widetilde{u_2u'_i}$ is the desired signed graph. 
If all non-zero entries in $x_{u'_2},\ldots,x_{u'_p}$ have the same sign, as before,   we can set  $\Gamma'=\alpha(\Gamma, u_2'u_3')$.
\end{proof}

\begin{figure}[!htb] 
	\begin{center}
		\includegraphics{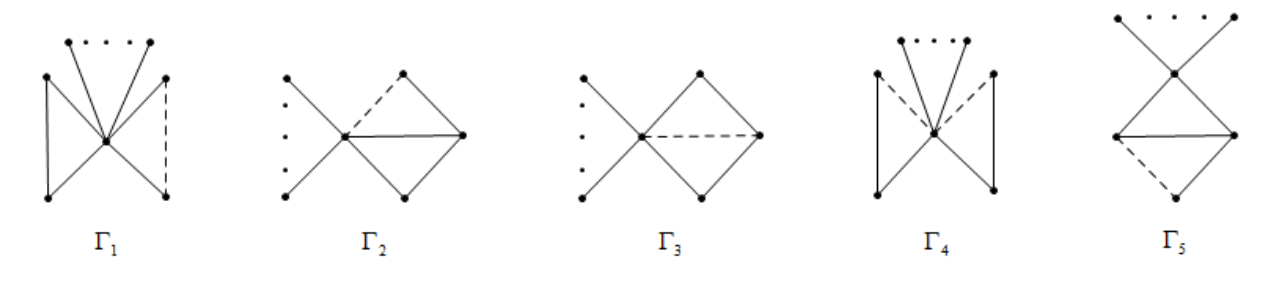}
		\makeatletter\def\@captype{figure}\makeatother 
		\vspace{-0.5cm}
		\caption{ Five   signed  graphs with  maximum index in $\mathcal{ B}_n$ }\label{fig:T1-T5}
	\end{center}
\end{figure}
\begin{lemma}\label{lem:ind} Let $\Gamma_i \in \mathcal{B}_n\ ( \mbox{ where} \  i=1,2,\ldots, 5)$ be the unbalanced signed graphs as shown in Figure  \ref{fig:T1-T5}, 
	then $\lambda(\Gamma_i)$ is the largest root of the equation $f_i(x)=0$, where  
	\begin{align*}
	f_1(x)&=x^4-nx^2+n-5, \\
	f_2(x)&=x^4-(n+1)x^2+2n-4,  \\
	f_3(x)&=x^4-(n+1)x^2+4x+2n-8, \\
	f_4(x)&=x^3+x^2-(n-1)x-n+5, \\
	f_5(x)&=x^3-x^2-(n-2)x+n-4.
	\end{align*}
	Furthermore, we have $\lambda(\Gamma_1)>\lambda(\Gamma_2)>\lambda(\Gamma_3)>\lambda(\Gamma_4)>\lambda(\Gamma_5)$ when $n\geq 36$.
		 \end{lemma}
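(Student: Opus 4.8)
The argument divides into two parts: first identify $\lambda(\Gamma_i)$ with the largest root of $f_i$, then order the five largest roots.

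\emph{Part 1 (the polynomials).} Each $\Gamma_i$ in Figure~\ref{fig:T1-T5} is a bounded bicyclic core $\widehat{\Gamma_i}$ (one of $B(3,3)$, $B(P_1,P_2,P_2)$, or a close relative) with all the remaining $n-O(1)$ vertices attached as pendant vertices at a single vertex. The plan is to compute $\Phi(\Gamma_i,x)$ by Schwenk's formula (Lemma~\ref{lem:Schwenk}), applied first repeatedly at the pendant vertices (these expansions meet no cycles, hence give a linear recursion in the number of pendants) and then at a vertex of the core. This produces a factorization $\Phi(\Gamma_i,x)=p_i(x)f_i(x)$, where $p_i(x)$ is an explicit product of factors among $x$ and $x^2-1$ (and, for the $\theta$-type cases, possibly one further small factor), so that every root of $p_i$ has absolute value $<2$. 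On the other hand $f_i$ is monic and $f_1(2)=11-3n$, $f_2(2)=8-2n$, $f_3(2)=12-2n$, $f_4(2)=19-3n$, $f_5(2)=4-n$ are all negative for $n\ge 7$, so the largest root of $f_i$ exceeds $2$. Since $\lambda(\Gamma_i)$ is the largest root of $\Phi(\Gamma_i,x)=p_i(x)f_i(x)$ and no root of $p_i$ reaches $2$, it follows that $\lambda(\Gamma_i)$ is the largest root of $f_i$.

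\emph{Part 2 (ordering).} Write $\lambda_j=\lambda(\Gamma_j)$. For $j=1,2,3,4$ I will show $f_j(\lambda_{j+1})<0$; since $f_j$ is monic with largest root $\lambda_j$ (so $f_j>0$ on $(\lambda_j,\infty)$), this forces $\lambda_{j+1}<\lambda_j$. Each $f_j(\lambda_{j+1})$ is evaluated by reducing $f_j$ modulo $f_{j+1}$ and using $f_{j+1}(\lambda_{j+1})=0$. One has $f_1-f_2=x^2-n-1$, so $f_1(\lambda_2)=\lambda_2^2-(n+1)$, and since $\lambda_2^2$ is the larger root of $y^2-(n+1)y+2n-4$ the inequality $\lambda_2^2<n+1$ is equivalent to $\sqrt{n^2-6n+17}<n+1$, i.e.\ $n>2$. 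Next $f_2-f_3=4(1-x)$, so $f_2(\lambda_3)=4(1-\lambda_3)<0$ because $f_3(2)=12-2n<0$ gives $\lambda_3>2$. Next $f_3=(x-1)f_4+(n-3-x^2)$, so $f_3(\lambda_4)=n-3-\lambda_4^2$, and from $f_4(\lambda_4)=0$ one gets $\lambda_4^2(\lambda_4+1)=(n-1)\lambda_4+(n-5)$, whence $\lambda_4^2-(n-3)=2(\lambda_4-1)/(\lambda_4+1)>0$ (using $f_4(1)=8-2n<0$, so $\lambda_4>1$). Finally $f_4-f_5=2x^2-x+9-2n$, so $f_4(\lambda_5)=2\lambda_5^2-\lambda_5+9-2n$; from $f_5(\lambda_5)=0$ one gets $\lambda_5^2=n-2+2/(\lambda_5-1)$, and substituting yields $f_4(\lambda_5)=-(\lambda_5^2-6\lambda_5+1)/(\lambda_5-1)$, which (as $\lambda_5>1$) is negative exactly when $\lambda_5>3+2\sqrt2$. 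This last bound is where the hypothesis $n\ge 36$ enters: $f_5(\sqrt{n-2})=-2<0$ gives $\lambda_5>\sqrt{n-2}$, and since $(3+2\sqrt2)^2=17+12\sqrt2<34$ we get $\lambda_5>\sqrt{n-2}\ge\sqrt{34}>3+2\sqrt2$ as soon as $n\ge 36$. Chaining the four strict inequalities gives $\lambda_1>\lambda_2>\lambda_3>\lambda_4>\lambda_5$.

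\emph{Expected obstacle.} The routine-but-lengthy part is the case-by-case Schwenk computation of $\Phi(\Gamma_i,x)$ and the bookkeeping showing $p_i$ has only small roots. The genuinely delicate point is the $\Gamma_4$--$\Gamma_5$ comparison: the two indices are extremely close (they differ by $O(n^{-1})$), and the clean sufficient condition $\lambda_5>3+2\sqrt2$ holds only from $n=36$ on — it is this comparison, and not any of the other three, that determines the threshold $n\ge 36$.
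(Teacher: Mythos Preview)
Your proposal is correct and follows the same two-stage plan as the paper (Schwenk expansion to produce $\Phi(\Gamma_i,x)=p_i(x)f_i(x)$, then polynomial sign comparisons). One harmless slip: for $\Gamma_5$ the prefactor is $p_5(x)=x^{n-5}(x+2)(x-1)$, which has $-2$ as a root, so ``every root of $p_i$ has absolute value $<2$'' is false as written; but what the argument actually needs is that the \emph{largest} root of each $p_i$ lies below the largest root of $f_i$, and since the largest root of $p_5$ is $1$ the conclusion is unaffected.

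The only real difference from the paper is in execution. For the first three inequalities the paper compares the full polynomials $\Phi(\Gamma_{i+1},x)-\Phi(\Gamma_i,x)$ directly, whereas you work with the factored $f_i$'s; this is cosmetic. For the delicate step $\lambda_4>\lambda_5$, both you and the paper start from $g=f_4-f_5=2x^2-x-2n+9$, but the paper then shows $f_5\bigl(\tfrac{1+\sqrt{16n-71}}{4}\bigr)>0$ for $n\ge 36$, placing $\lambda_5$ below the larger root of $g$ and hence forcing $g(\lambda_5)<0$. Your route --- eliminate $\lambda_5^2$ via $f_5(\lambda_5)=0$ to get the closed form $f_4(\lambda_5)=-(\lambda_5^2-6\lambda_5+1)/(\lambda_5-1)$, and then use the neat lower bound $\lambda_5>\sqrt{n-2}$ from $f_5(\sqrt{n-2})=-2$ --- is arguably cleaner and makes the appearance of $n\ge 36$ completely transparent (it is exactly the inequality $34>17+12\sqrt2$, i.e.\ $289>288$).
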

\begin{proof}
	By Lemma \ref{lem:Schwenk}, one can get the characteristic polynomials of $\Gamma_1, \Gamma_2,\Gamma_3,\Gamma_4,\Gamma_5$ by direct calculation,
\begin{align*}
\Phi(\Gamma_1,x)&=x^{n-6}(x^2-1)(x^4-nx^2+n-5), \\
\Phi(\Gamma_2,x)&=x^{n-4}[x^4-(n+1)x^2+2n-4],  \\
\Phi(\Gamma_3,x)&=x^{n-4}[x^4-(n+1)x^2+4x+2n-8], \\
\Phi(\Gamma_4,x)&=x^{n-6}(x+1)(x-1)^2[x^3+x^2-(n-1)x-n+5], \\
\Phi(\Gamma_5,x)&=x^{n-5}(x+2)(x-1)[x^3-x^2-(n-2)x+n-4].
\end{align*}	
By comparing the index of   graphs  and applying equations above, we have \begin{align*}
\Phi(\Gamma_2,x)-\Phi(\Gamma_1,x)&=x^{n-6}(x^2+n-5)>0,\\
\Phi(\Gamma_3,x)-\Phi(\Gamma_2,x)&=4x^{n-4}(x-1),\\
\Phi(\Gamma_4,x)-\Phi(\Gamma_3,x)&=x^{n-6}(3x^2-4x-n+5).
\end{align*}	
The Interlacing Theorem implies that $\lambda(\Gamma_i)>\sqrt{n-2}>1$ for $i=2,3$. It is not difficult to see that  	$\Phi(\Gamma_3,x)>\Phi(\Gamma_2,x)$ when $x\geq \lambda(\Gamma_2)$ and $\Phi(\Gamma_4,x)>\Phi(\Gamma_3,x)$ when $x\geq \lambda(\Gamma_3)$. These are exactly what we need here,  	$\lambda(\Gamma_1)>\lambda(\Gamma_2)>\lambda(\Gamma_3)>\lambda(\Gamma_4)$.
	
To compare $\lambda(\Gamma_4)$ and $\lambda(\Gamma_5)$, we let
\begin{align*}f_4(x)&=x^3+x^2-(n-1)x-n+5,\\
f_5(x)&=x^3-x^2-(n-2)x+n-4.
\end{align*}\
Then	 $g(x)=f_4(x)-f_5(x)=2x^2-x-2n+9$ has the largest root $\frac{1+\sqrt{16n-71}}{4}$. One can check directly   $f_5(-\infty)<0$,  $f_5(0)=n-4>0$,  $f_5(1)=-2<0$ and $f_5(\frac{1+\sqrt{16n-71}}{4})>0$ when $n\geq 36$. Hence, the largest root of $f_5(x)=0$ is less than $\frac{1+\sqrt{16n-71}}{4}$, which implies that $f_4(x)<0$ when $x$ is the largest root of $f_5(x)=0$. Therefore, we have $\lambda(\Gamma_4)>\lambda(\Gamma_5)$. This completes the proof. 
\end{proof}

\begin{figure}[!htb] 
	\begin{center}	
		\includegraphics{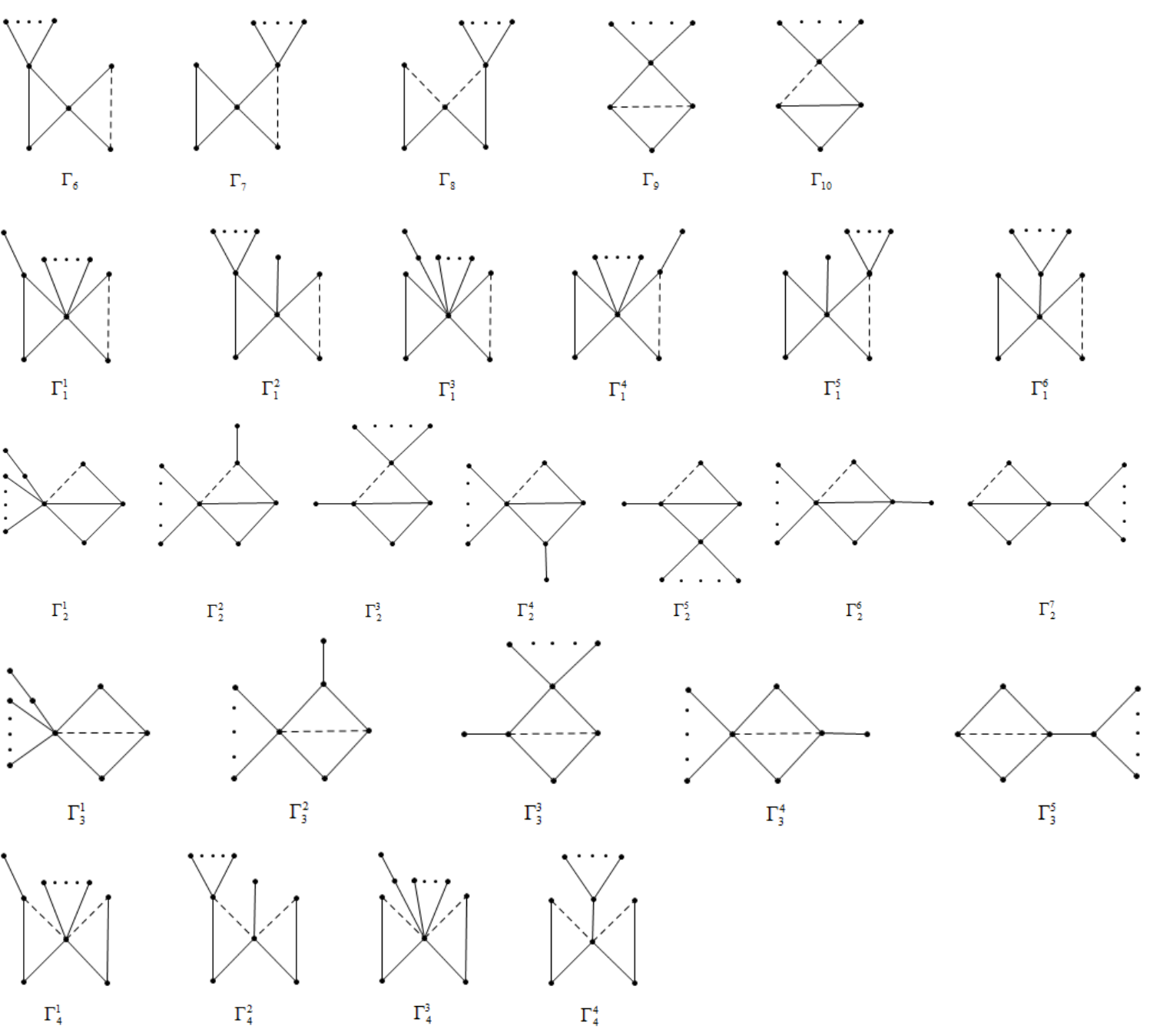}
		\makeatletter\def\@captype{figure}\makeatother 
		\vspace{-0.5cm}
		\caption{ Signed  graphs considered in the  proof of Lemma \ref{lem:inf} and Lemma \ref{lem:theta}}\label{fig:othergraphs}
	\end{center}
\end{figure}	
\begin{lemma}\label{lem:inf}
	If $\Gamma  \in \mathcal{B}_n$  is an $\infty$-type   graph and is not switching equivalent to    $\Gamma_1,$ or $ \Gamma_4$, then $\lambda(\Gamma)< \lambda(\Gamma_5)$.
	\end{lemma}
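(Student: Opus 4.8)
The plan is to reduce an arbitrary $\infty$-type unbalanced signed bicyclic graph $\Gamma$ to one whose base is as small as possible, and then to settle the finite list of remaining cases by direct spectral comparison with $\Gamma_5$. First I would invoke Lemma \ref{lem:bibase} repeatedly: as long as $\hat{\Gamma}\notin\mathcal{B}_n(3,3)$ we can pass to a new $\infty$-type unbalanced graph with strictly smaller base and index at least as large, so after finitely many steps we may assume $\hat{\Gamma}=B(3,3)$, i.e. the base is two triangles sharing a vertex, with (by Lemma \ref{lem:neg}) exactly one negative edge on one triangle. The remaining hanging trees can then be handled by Corollary \ref{lem:concut} and the remark following it: $\alpha$-transforming along any induced subtree does not decrease the index, so each attached tree may be replaced by a star centered at its root on the base. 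Thus it suffices to compare $\lambda(\Gamma_5)$ against all signed graphs obtained from $B(3,3)$ by attaching stars at the base vertices (with the negative edge in one fixed position), excluding those switching-equivalent to $\Gamma_1$ or $\Gamma_4$.

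Next I would argue that among these "$B(3,3)$ plus stars" graphs, the index is maximized by concentrating all the pendant vertices at a single base vertex. This is exactly the content of Lemma \ref{lem:gra2} (moving pendant edges from $v$ to $u$ whenever $x_u>x_v>0$): since in a connected graph the Perron eigenvector is positive (after appropriate switching on the bridges, using Remark \ref{rem:tree}), one can compare the eigenvector entries at the various base vertices and push all pendants toward the one with the largest entry. Carrying this out identifies a short list of candidate "most concentrated" configurations — pendants all at the shared cut vertex of $B(3,3)$, or all at a degree-$2$ vertex of the triangle carrying the negative edge, or all at a degree-$2$ vertex of the positive triangle — and among these the graphs $\Gamma_1$ and $\Gamma_4$ (which are of $\infty$-type in this family) are precisely the two we have excluded. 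So the genuinely new extremal candidate reduces to one or two explicit graphs.

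Finally I would compute, via Lemma \ref{lem:Schwenk}, the characteristic polynomial of each surviving candidate $\Gamma^*$, factor out the trivial $x^{n-c}$ and small cyclotomic-type factors as in the proof of Lemma \ref{lem:ind}, and compare its largest root with $\lambda(\Gamma_5)$, the largest root of $f_5(x)=x^3-x^2-(n-2)x+n-4$. The standard device is to show $\Phi(\Gamma^*,x)>\Phi(\Gamma_5,x)$ (equivalently the reduced cubic/quartic of $\Gamma^*$ is positive) for all $x\ge\lambda(\Gamma_5)$, which forces $\lambda(\Gamma^*)<\lambda(\Gamma_5)$; this is where the hypothesis $n\ge 36$ enters, exactly as in Lemma \ref{lem:ind}. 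Since $\Gamma$ was obtained from $\Gamma^*$ (up to switching equivalence) only by operations that do not decrease the index, we would in fact have $\lambda(\Gamma)\le\lambda(\Gamma^*)<\lambda(\Gamma_5)$, completing the proof.

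The main obstacle I anticipate is the bookkeeping in the middle step: one must be careful that the reductions (shrinking the base, starifying the trees, concentrating the pendants) never accidentally produce a graph switching-equivalent to $\Gamma_1$ or $\Gamma_4$ when $\Gamma$ itself was not — and conversely that every $\infty$-type graph other than $\Gamma_1,\Gamma_4$ really does reduce to a candidate that is strictly below $\Gamma_5$. Handling the low-symmetry cases of $B(3,3)$ with pendants split between two or three base vertices, and verifying that the pendant-concentration lemma applies with a strict inequality there (so those configurations are non-extremal), is the delicate part; the final polynomial inequalities for $n\ge 36$ are routine once the candidate list is pinned down.
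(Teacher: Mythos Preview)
Your overall strategy --- shrink the base to $B(3,3)$ via Lemma \ref{lem:bibase}, replace attached trees by stars via Corollary \ref{lem:concut}, then compare a finite list of candidates against $\Gamma_5$ using Lemma \ref{lem:Schwenk} --- is exactly the paper's. The gap is in your pendant-concentration step. You assert that ``in a connected graph the Perron eigenvector is positive (after appropriate switching on the bridges, using Remark \ref{rem:tree})'', but this is false for \emph{unbalanced} signed graphs: Perron--Frobenius does not apply because $A(\Gamma)$ has negative entries that cannot all be switched away, and Remark \ref{rem:tree} only controls signs on an induced \emph{subtree}, not on the base cycles. The leading eigenvector will in general have components of both signs on the five vertices of $B(3,3)$, so the hypothesis of Lemma \ref{lem:gra2} (namely $x_u\ge x_v\ge 0$ or $x_u\le x_v\le 0$) can fail when you try to move pendants between base vertices, and you cannot in general funnel everything to a single vertex.

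This is precisely why the paper's candidate list is much longer than your ``one or two explicit graphs'': it checks thirteen configurations $\Gamma_6,\Gamma_7,\Gamma_8$, $\Gamma_1^1,\ldots,\Gamma_1^6$, $\Gamma_4^1,\ldots,\Gamma_4^4$ (see Figure \ref{fig:othergraphs} and the Appendix), which cover the cases where one or two pendants sit at a secondary base vertex together with the various inequivalent placements of the negative edge. You do flag this concentration step as ``the delicate part'', but the resolution is not a sharper strict-inequality version of Lemma \ref{lem:gra2}; it is simply to enumerate and compute all of these near-extremal split configurations. Once that list is fixed, the polynomial comparisons are routine, as you say.
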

\begin{proof}
By Lemma \ref{lem:concut}, Lemma \ref{lem:gra2} and Corollary \ref{lem:concut}, it is not difficult to see that, we only need to prove that  if  $\Gamma\in \{   \Gamma_4, \ \Gamma_6,\ \Gamma_7,\ \Gamma_8,\ \Gamma_1^i,\  \Gamma_4^j\}$ , where $1\leq i\leq 6$ and $1\leq j\leq 4$ (  as shown in Figure \ref{fig:othergraphs}). By direct computation, we can prove that 
$$\lambda(\Gamma_5)>\lambda(\Gamma_6)=\max\{\lambda(\Gamma_6),\lambda(\Gamma_7), \lambda(\Gamma_8)\},$$
$$\lambda(\Gamma_5)>\lambda(\Gamma_1^1)=\max\{\lambda(\Gamma_1^1),\ldots, \lambda(\Gamma_1^6)\}, $$
and $$\lambda(\Gamma_5)>\lambda(\Gamma_4^3)>\lambda(\Gamma_4^4),\ \lambda(\Gamma_5)>\lambda(\Gamma_4^1)>\lambda(\Gamma_4^2). $$
Hence, we can get the desired result.
\end{proof}

\begin{figure}[!htb] \label{6}
	\begin{center}	 
		\includegraphics{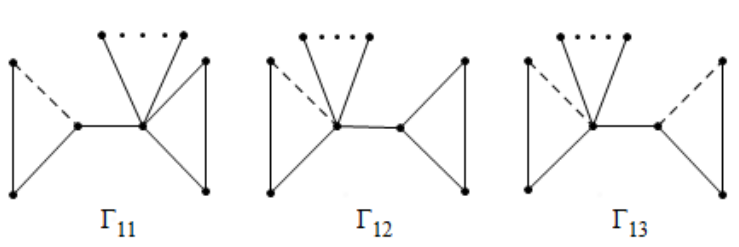}
		\makeatletter\def\@captype{figure}\makeatother 
		\vspace{-0.5cm}
		\caption{ Signed  graphs considered in proof of Lemma \ref{lem:dum} }
	\end{center}
\end{figure}	

\begin{lemma}\label{lem:dum}
	If $\Gamma $ is a dumbbell-type     unbalanced signed graph,  then $\lambda(\Gamma)< \lambda(\Gamma_5)$.
\end{lemma}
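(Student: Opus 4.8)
The plan is to reduce an arbitrary dumbbell-type unbalanced signed graph $\Gamma$ to a small list of candidates by repeatedly applying the $\alpha$-transform machinery, and then to bound the index of each candidate by $\lambda(\Gamma_5)$ via Schwenk's formula and polynomial comparison, exactly as in the proof of Lemma \ref{lem:inf}. First I would invoke Lemma \ref{lem:neg} to assume $\Gamma$ has exactly one negative edge, lying on the unbalanced cycle. Then, by Corollary \ref{lem:concut} and Lemma \ref{lem:gra2}, any pendant tree attached to a vertex of the base $\widehat{\Gamma}$ can be replaced by a star (or further collapsed to a path/pendant edge at a single vertex) without decreasing the index; so it suffices to treat graphs whose only extra structure beyond the base is a single star hanging off one vertex of the base, or equivalently a few pendant edges concentrated at one vertex.

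Next I would shrink the base itself. The base is $B(p,\ell,q)$ with $p\ge q\ge 3$ and $\ell\ge 1$. Using Lemma \ref{lem:albi} on the degree-2 internal vertices of the connecting path and of the two cycles, together with the sign-analysis trick from Lemma \ref{lem:bibase} (replacing a negative-valued stretch of eigenvector coordinates by its absolute value via switching, or relocating an edge $u_iu_{i+1}$ to create a shorter unbalanced cycle when consecutive coordinates change sign), I expect to be able to push the path length $\ell$ down and the cycle lengths $p,q$ down toward $3$, while keeping the graph dumbbell-type and not decreasing the index. This should leave only finitely many "minimal" dumbbell graphs — essentially the hourglass $B(3,1,3)$ and $B(3,1,4)$, $B(4,1,3)$, possibly $B(3,2,3)$ — each decorated with a star of $n - |V(\widehat{\Gamma})|$ pendant edges at the best-choice vertex. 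These are the analogues of the graphs $\Gamma_6,\Gamma_7,\Gamma_8,\Gamma_1^i,\Gamma_4^j$ in Figure 4.3, and are presumably the family drawn in Figure 4.5.

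Finally, for each surviving candidate $\Lambda$ I would compute $\Phi(\Lambda,x)$ by Lemma \ref{lem:Schwenk}, extract the relevant cubic or quartic factor governing $\lambda(\Lambda)$, and show it is positive at $x=\lambda(\Gamma_5)$ (using $f_5(\lambda(\Gamma_5))=0$ and the explicit form of $f_5$), which forces $\lambda(\Lambda)<\lambda(\Gamma_5)$; the Interlacing Theorem (Lemma \ref{th:interlacing}) gives crude lower bounds like $\lambda(\Gamma_5)>\sqrt{n-2}$ to control signs of the lower-order terms, and the hypothesis $n\ge 36$ is used here exactly as in Lemma \ref{lem:ind}. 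The main obstacle is the bookkeeping in the reduction step: one must verify that each $\alpha$-transform or edge-relocation keeps the graph \emph{dumbbell-type} (rather than turning it into an $\infty$-type or $\theta$-type graph) and keeps it \emph{unbalanced}, and that the finite list of irreducible dumbbell bases is genuinely complete; once the list is pinned down, the eigenvalue comparisons are routine polynomial estimates of the same flavor as those already carried out in Lemma \ref{lem:ind} and Lemma \ref{lem:inf}.
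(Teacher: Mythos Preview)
Your plan matches the paper's proof essentially line for line: the paper too invokes the reduction machinery of Lemma~\ref{lem:neg}, Corollary~\ref{lem:concut}, Lemma~\ref{lem:gra2}, and the $\alpha$-transform/sign-case analysis of Lemmas~\ref{lem:albi} and~\ref{lem:bibase} (its proof literally says ``similar to the proof of Lemma~\ref{lem:bibase} and Lemma~\ref{lem:inf}''), arriving at a finite list of exactly three dumbbell candidates $\Gamma_{11},\Gamma_{12},\Gamma_{13}$ rather than your tentative list of four bases. It then verifies $\max\{\lambda(\Gamma_{11}),\lambda(\Gamma_{12}),\lambda(\Gamma_{13})\}<\lambda(\Gamma_{5})$ by direct computation from the characteristic polynomials recorded in the appendix, just as you propose.
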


\begin{proof} Similar to the proof of Lemma \ref{lem:bibase} and Lemma \ref{lem:inf}, we know that for any $\Gamma \in \mathcal{B}_n(p,\ell,q)$, the index of $\lambda(\Gamma)\leq \max\{\lambda(\Gamma _{11}),\lambda(\Gamma _{12}),\lambda(\Gamma _{13})\}<\lambda(\Gamma _{5})$ (where $\Gamma _{11},\ \Gamma _{12},\ \Gamma _{13}$ are the signed graphs shown as in \ref{6}). 
\end{proof}

\begin{lemma}\label{lem:theta}
		If $\Gamma \in \mathcal{B}_n$  is a $\theta$-type     graph, and is not switching equivalent to  $ \Gamma_2$, $\Gamma_3$ or $\Gamma_5$, then $\lambda(\Gamma)< \lambda(\Gamma_5)$.
\end{lemma}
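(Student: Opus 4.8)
The plan is to imitate the strategy of Lemmas \ref{lem:inf} and \ref{lem:dum}: collapse an arbitrary $\theta$-type unbalanced graph onto a bounded skeleton by means of the perturbation results of Section 2, and then finish with a finite characteristic-polynomial comparison against $f_5$. To begin, by Lemma \ref{lem:neg} we may assume that $\Gamma$ has exactly one negative edge and that it lies on the base $\hat\Gamma$. If $\hat\Gamma\neq B(P_1,P_2,P_2)$, then Lemma \ref{lem:tribase} yields a $\theta$-type unbalanced graph $\Gamma'$ with $|V(\hat{\Gamma'})|<|V(\hat\Gamma)|$ and $\lambda(\Gamma')\geq\lambda(\Gamma)$; iterating this, it is enough to bound the index when $\hat\Gamma=B(P_1,P_2,P_2)$, i.e. when $\hat\Gamma$ is $K_4$ with one edge removed, bearing a single negative edge, and $\Gamma$ is obtained from $\hat\Gamma$ by attaching rooted trees at its four vertices. (Strictness will be recovered at the end: the graphs at which \emph{no} reduction below is strict will turn out to be exactly $\Gamma_2,\Gamma_3,\Gamma_5$, so if $\Gamma$ is switching-inequivalent to all three, then its index is strictly smaller than that of whichever candidate it reduces to.)

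Next I would shrink the attached trees. By Corollary \ref{lem:concut} together with the paragraph following it, each tree rooted at a base vertex may be replaced by a star centred at that vertex without decreasing the index, and by Remark \ref{rem:tree} we may take all the resulting pendant edges to be positive and the eigenvector ${\bf x}$ to be non-negative on them. Using Lemmas \ref{lem:gra1} and \ref{lem:gra2} on the pendant edges, and Lemma \ref{lem:albi} to slide the attachments at the two degree-$2$ vertices of the skeleton along the base, the whole pendant part can be consolidated at the single base vertex whose ${\bf x}$-entry is largest. Since the position of the unique negative edge on $K_4-e$ breaks the symmetry in only finitely many ways, and since the sign pattern of ${\bf x}$ on the short base is itself constrained by Lemma \ref{lem:cut} and the eigenvalue equations, this procedure leaves only a bounded list of candidate signed graphs: $\Gamma_2,\Gamma_3,\Gamma_5$ together with finitely many near-extremal competitors, namely the $\theta$-type analogues of the auxiliary graphs of Figure \ref{fig:othergraphs}.

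Finally I would dispatch the finite comparison exactly as in Lemma \ref{lem:ind}. For each candidate $\Delta$ other than $\Gamma_2,\Gamma_3,\Gamma_5$, Schwenk's formula (Lemma \ref{lem:Schwenk}) gives $\Phi(\Delta,x)=x^{a}p(x)q(x)$ with $\deg q\le 4$ and $p$ a polynomial of small degree; one checks that $q(x)>0$, equivalently that $f_5$ dominates the relevant cubic or quartic factor, for all $x\ge\lambda(\Gamma_5)$ provided $n\ge 36$, whence $\lambda(\Delta)<\lambda(\Gamma_5)$. Combined with the weakly index-increasing reductions and the strictness observation above, this gives $\lambda(\Gamma)<\lambda(\Gamma_5)$ for every $\theta$-type $\Gamma$ not switching-equivalent to $\Gamma_2,\Gamma_3$ or $\Gamma_5$. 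I expect the main obstacle to be the bookkeeping in the second step: verifying that the candidate list is genuinely exhaustive --- that every admissible location of the negative edge and every admissible sign pattern of ${\bf x}$ on the base is indeed covered by one of the Section-2 reductions --- and identifying precisely which graphs admit equality, so that $\Gamma_2,\Gamma_3,\Gamma_5$ are neither omitted from nor wrongly included in the exceptional set. The polynomial inequalities at the end are routine but, as with $f_4$ versus $f_5$ in Lemma \ref{lem:ind}, will genuinely require the hypothesis $n\ge 36$.
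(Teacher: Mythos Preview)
Your plan is essentially the paper's own argument: reduce the base to $B(P_1,P_2,P_2)$ via Lemma \ref{lem:tribase}, replace attached trees by stars (Corollary \ref{lem:concut}), redistribute pendants using Lemmas \ref{lem:gra1}--\ref{lem:gra2}, and finish by a finite characteristic-polynomial comparison against $f_5$; the paper carries this out by listing the surviving candidates explicitly as $\Gamma_9,\Gamma_{10},\Gamma_2^1,\dots,\Gamma_2^7,\Gamma_3^1,\dots,\Gamma_3^5$ (Figure \ref{fig:othergraphs} and the Appendix table) and checking each against $\lambda(\Gamma_5)$. One small slip to fix: once $\hat\Gamma=B(P_1,P_2,P_2)$ the two degree-$2$ base vertices are not adjacent, so Lemma \ref{lem:albi} (which needs $d_{\hat\Gamma}(u_2)=d_{\hat\Gamma}(u_3)=2$ for an edge $u_2u_3$) cannot be invoked on that skeleton; this is harmless, since the pendant moves you actually need are covered by Lemma \ref{lem:gra2}, and the residual case analysis over sign patterns of ${\bf x}$ is exactly the ``bookkeeping'' you anticipate and what produces the paper's explicit list.
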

\begin{proof}
	It is not difficult to see that, we only need to consider the case that   $\Gamma\in \{   \Gamma_9, \ \Gamma_{10},\  \Gamma_2^i,\  \Gamma_3^j\}$ , where $1\leq i\leq 7$ and $1\leq j\leq 5$ ( as shown in Figure \ref{fig:othergraphs}). By direct computation, we can prove that 
	$$\lambda(\Gamma_5)>\lambda(\Gamma_9),\  \lambda(\Gamma_5)>\lambda(\Gamma_{10}),$$
	$$\lambda(\Gamma_5)>\lambda(\Gamma_2^1)=\max\{\lambda(\Gamma_2^1),\ldots, \lambda(\Gamma_2^4)\},$$
	$$\lambda(\Gamma_5)>\lambda(\Gamma_3^1)=\max\{\lambda(\Gamma_3^1),\lambda(\Gamma_3^3), \lambda(\Gamma_3^5)\}, \ \lambda(\Gamma_5)>\lambda(\Gamma_3^2)>\lambda(\Gamma_3^4). $$
	Hence, we can get the desired result.
\end{proof}

Combining  Lemma \ref*{lem:inf}, Lemma \ref{lem:dum} and Lemma \ref{lem:theta}, we can get the following result immediately.
\begin{theorem}  Let $\Gamma_i \in \mathcal{B}_n\ ( \mbox{ where} \  i=1,2,\ldots, 5)$ be the unbalanced signed graphs as shown in Figure  \ref{fig:T1-T5}, then

\begin{enumerate}[(1).]
	\item 	the index  $\lambda(\Gamma_i)$ is the largest root of the equation $f_i(x)=0$, where  
	\begin{align*}
	f_1(x)&=x^4-nx^2+n-5, \\
	f_2(x)&=x^4-(n+1)x^2+2n-4,  \\
	f_3(x)&=x^4-(n+1)x^2+4x+2n-8, \\
	f_4(x)&=x^3+x^2-(n-1)x-n+5, \\
	f_5(x)&=x^3-x^2-(n-2)x+n-4,
	\end{align*}
	\item  for $n\geq 36$,  we have  	$\lambda(\Gamma_1)>\lambda(\Gamma_2)>\lambda(\Gamma_3)>\lambda(\Gamma_4)>\lambda(\Gamma_5)$,
	\item if  $\displaystyle\Gamma\in \mathcal{ B}_n$   is not switching equivalent to $  \Gamma_1, \Gamma_2,\Gamma_3,\Gamma_4$ or $ \Gamma_5 $, we have  
$\lambda(\Gamma ) <\lambda (\Gamma_5 )$. 
\end{enumerate}
\end{theorem}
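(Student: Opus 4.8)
Parts (1) and (2) require nothing new: they are exactly the content of Lemma \ref{lem:ind}. There the characteristic polynomials $\Phi(\Gamma_i,x)$ are obtained from the Schwenk-type expansion of Lemma \ref{lem:Schwenk}, each $\lambda(\Gamma_i)$ is identified with the largest root of the distinguished cubic/quartic factor $f_i$ (the polynomial factors $x^{n-6}$, $x^2-1$, $x+1$, $(x-1)^2$, $x+2$, $x-1$ contribute no eigenvalue as large as $\lambda(\Gamma_i)$, which exceeds $1$ by Interlacing), and the strict chain $\lambda(\Gamma_1)>\lambda(\Gamma_2)>\lambda(\Gamma_3)>\lambda(\Gamma_4)>\lambda(\Gamma_5)$ for $n\geq 36$ is proved by comparing consecutive $\Phi_i$ on $[\lambda(\Gamma_{i+1}),\infty)$ and, for the delicate $\Gamma_4$--$\Gamma_5$ step, by locating the largest root of $f_5$ to the left of the larger root of $f_4-f_5$. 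So I would simply quote Lemma \ref{lem:ind} for (1) and (2).

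For part (3) the plan is to use the trichotomy from Section 3, namely the disjoint decomposition $\mathcal{B}_n=\mathcal{B}_n(p,q)\,\dot{\cup}\,\mathcal{B}_n(p,\ell,q)\,\dot{\cup}\,\mathcal{B}_n(P_k,P_\ell,P_m)$, so that every unbalanced signed bicyclic graph is of exactly one of the three types ($\infty$, dumbbell, $\theta$); note that switching does not change the underlying graph, hence does not change the type. First I would read off from Figure \ref{fig:T1-T5} that among $\Gamma_1,\dots,\Gamma_5$ the $\infty$-type graphs are precisely $\Gamma_1,\Gamma_4$, the $\theta$-type graphs are precisely $\Gamma_2,\Gamma_3,\Gamma_5$, and none of the $\Gamma_i$ is a dumbbell. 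Now let $\Gamma\in\mathcal{B}_n$ be not switching equivalent to any of $\Gamma_1,\dots,\Gamma_5$, and split on its type. If $\Gamma$ is $\infty$-type it is automatically not switching equivalent to the $\theta$-type graphs $\Gamma_2,\Gamma_3,\Gamma_5$, so the hypothesis reduces to $\Gamma\not\sim\Gamma_1,\Gamma_4$, which is exactly the hypothesis of Lemma \ref{lem:inf}; hence $\lambda(\Gamma)<\lambda(\Gamma_5)$. If $\Gamma$ is a dumbbell, Lemma \ref{lem:dum} gives $\lambda(\Gamma)<\lambda(\Gamma_5)$ with no side condition. If $\Gamma$ is $\theta$-type, it is automatically not switching equivalent to $\Gamma_1,\Gamma_4$, so the hypothesis reduces to $\Gamma\not\sim\Gamma_2,\Gamma_3,\Gamma_5$, which is the hypothesis of Lemma \ref{lem:theta}; hence $\lambda(\Gamma)<\lambda(\Gamma_5)$. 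Since $\lambda$ is a switching invariant, these three cases exhaust all possibilities and prove (3).

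The honest remark is that essentially all the difficulty sits in the three lemmas being invoked, not in the assembly above: their proofs must first apply the base-shrinking Lemmas \ref{lem:bibase} and \ref{lem:tribase} to reduce to a graph whose base is $B(3,3)$, a minimal dumbbell, or $B(P_1,P_2,P_2)$, then use the edge-moving toolkit (Lemmas \ref{lem:gra1}, \ref{lem:gra2}, \ref{lem:albi} and Corollary \ref{lem:concut}) to collapse every attached tree into a star or a single pendant path, leaving only the finite list of survivors $\Gamma_6,\dots,\Gamma_{13},\Gamma_1^i,\Gamma_4^j,\Gamma_2^i,\Gamma_3^j$, and finally compare each survivor's index with $\lambda(\Gamma_5)$ by explicit characteristic-polynomial evaluation. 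So the main obstacle is not the present theorem but the bookkeeping of that reduction; for the theorem itself the only points needing care are verifying that the exceptional sets of Lemmas \ref{lem:inf}, \ref{lem:dum}, \ref{lem:theta} together equal $\{\Gamma_1,\dots,\Gamma_5\}$ and that the threshold $n\geq 36$ is used only where the cited lemmas actually need it.
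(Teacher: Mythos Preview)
Your proposal is correct and follows essentially the same approach as the paper: parts (1) and (2) are exactly Lemma \ref{lem:ind}, and part (3) is obtained by combining Lemmas \ref{lem:inf}, \ref{lem:dum} and \ref{lem:theta} across the three-type decomposition of $\mathcal{B}_n$. The paper's own proof is the one-line ``Combining Lemma \ref{lem:inf}, Lemma \ref{lem:dum} and Lemma \ref{lem:theta}, we can get the following result immediately''; your version is just a more explicit unpacking of that, including the (correct) observation that the type determines which exceptional $\Gamma_i$ need to be excluded and that the union of the exceptional sets is exactly $\{\Gamma_1,\dots,\Gamma_5\}$.
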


\newpage\section*{Appendix}	
\begin{table}[!htpb]
	\footnotesize
	\renewcommand\arraystretch{1.3}
	\centering
	\caption{The characteristic polynomials of signed graphs  in Section 4}
	\begin{tabular}{|c|l|}
		\hline
		Signed graph&Characteristic polynomial\\
		\hline
		$\Gamma_1$&$\Phi(\Gamma_1,x)=x^{n-6}(x^2-1)(x^4-nx^2+n-5)$\\
		\hline
		$\Gamma_2$&$\Phi(\Gamma_2,x)=x^{n-4}[x^4-(n+1)x^2+2n-4]$\\
		\hline
		$\Gamma_3$&$\Phi(\Gamma_3,x)=x^{n-4}[x^4-(n+1)x^2+4x+2n-8]$\\
		\hline
		$\Gamma_4$&$\Phi(\Gamma_4,x)=x^{n-6}(x+1)(x-1)^2[x^3+x^2-(n-1)x-n+5]$\\
		\hline
		$\Gamma_5$&$\Phi(\Gamma_5,x)=x^{n-5}(x+2)(x-1)[x^3-x^2-(n-2)x+n-4]$\\
		\hline
		$\Gamma_6$&$\Phi(\Gamma_6,x)=x^{n-6}(x-1)[x^5+x^4-nx^3-nx^2+(3n-15)x+n-5]$\\
		\hline
		$\Gamma_7$&$\Phi(\Gamma_7,x)=x^{n-6}(x+1)[x^5-x^4-nx^3+nx^2+(3n-15)x-n+5]$\\
		\hline
		$\Gamma_8$&$\Phi(\Gamma_8,x)=x^{n-6}(x-1)[x^5+x^4-nx^3-(n-4)x^2+(3n-11)x+n-5]$\\
		\hline
		$\Gamma_9$&$\Phi(\Gamma_9,x)=x^{n-5}(x-1)[x^4+x^3-nx^2-(n-4)x+2n-8]$\\
		\hline
		$\Gamma_{10}$&$\Phi(\Gamma_{10},x)=x^{n-5}(x-2)(x+1)[x^3+x^2-(n-2)x-n+4]$\\
		\hline
		$\Gamma_{11}$&$\Phi(\Gamma_{11},x)=x^{n-7}(x-1)^2(x+1)[x^4+x^3-(n-1)x^2-(n-1)x+2n-12]$\\
		\hline
		$\Gamma_{12}$&$\Phi(\Gamma_{12},x)=x^{n-7}(x-1)(x+1)^2[x^4-x^3-(n-1)x^2+(n-1)x+2n-12]$\\
		\hline
		$\Gamma_{13}$&$\Phi(\Gamma_{13},x)=x^{n-7}(x-1)^2[x^5+2x^4-(n-2)x^3-(2n-6)x^2+(n-3)x+2n-12]$\\
		\hline
		$\Gamma_1^1$&$\Phi(\Gamma_1^1,x)=x^{n-6}(x-1)[x^5+x^4-nx^3-nx^2+(2n-9)x+2n-11]$\\
		\hline
		$\Gamma_1^2$&$\Phi(\Gamma_1^2,x)=x^{n-6}(x-1)[x^5+x^4-nx^3-nx^2+(4n-23)x+2n-11]$\\
		\hline
		$\Gamma_1^3$&$\Phi(\Gamma_1^3,x)=x^{n-8}(x-1)^2(x+1)^2[x^4-(n-1)x^2+n-7]$\\
		\hline
		$\Gamma_1^4$&$\Phi(\Gamma_1^4,x)=x^{n-6}(x+1)[x^5-x^4-nx^3+nx^2+(2n-9)x-2n+11]$\\
		\hline
		$\Gamma_1^5$&$\Phi(\Gamma_1^5,x)=x^{n-6}(x+1)[x^5-x^4-nx^3+nx^2+(4n-23)x-2n+11]$\\
		\hline
		$\Gamma_1^6$&$\Phi(\Gamma_1^6,x)=x^{n-6}(x+1)(x-1)(x^4-nx^2+5n-29)$\\
		\hline
		$\Gamma_2^1$&$\Phi(\Gamma_2^1,x)=x^{n-6}[x^6-(n+1)x^4+(3n-7)x^2-2n+8]$\\
		\hline
		$\Gamma_2^2$&$\Phi(\Gamma_2^2,x)=x^{n-6}[x^6-(n+1)x^4+(3n-8)x^2+2x-n+5]$\\
		\hline
		$\Gamma_2^3$&$\Phi(\Gamma_2^3,x)=x^{n-6}[x^6-(n+1)x^4+(4n-14)x^2+(2n-10)x-n+5]$\\
		\hline
		$\Gamma_2^4$&$\Phi(\Gamma_2^4,x)=x^{n-6}[x^6-(n+1)x^4+(3n-8)x^2-2x-n+5]$\\
		\hline
		$\Gamma_2^5$&$\Phi(\Gamma_2^5,x)=x^{n-6}[x^6-(n+1)x^4+(4n-14)x^2-(2n-10)x-n+5]$\\
		\hline
		$\Gamma_2^6$&$\Phi(\Gamma_2^6,x)=x^{n-4}[x^4-(n+1)x^2+3n-9]$\\
		\hline
		$\Gamma_2^7$&$\Phi(\Gamma_2^7,x)=x^{n-6}[x^6-(n+1)x^4+(5n-19)x^2-4n+20]$\\
		\hline
		$\Gamma_3^1$&$\Phi(\Gamma_3^1,x)=x^{n-6}[x^6-(n+1)x^4+4x^3+(3n-11)x^2-4x-2n+12]$\\
		\hline
		$\Gamma_3^2$&$\Phi(\Gamma_3^2,x)=x^{n-6}[x^6-(n+1)x^4+4x^3+(3n-12)x^2-2x-n+6]$\\
		\hline
		$\Gamma_3^3$&$\Phi(\Gamma_3^3,x)=x^{n-6}[x^6-(n+1)x^4+4x^3+(4n-18)x^2-(2n-10)x-n+5]$\\
		\hline
		$\Gamma_3^4$&$\Phi(\Gamma_3^4,x)=x^{n-4}[x^4-(n+1)x^2+4x+3n-13]$\\
		\hline
		$\Gamma_3^5$&$\Phi(\Gamma_3^5,x)=x^{n-5}[x^5-(n+1)x^3+4x^2+(5n-23)x-4n+20]$\\
		\hline
		$\Gamma_4^1$&$\Phi(\Gamma_4^1,x)=x^{n-6}(x-1)[x^5+x^4-nx^3-(n-4)x^2+(2n-5)x+2n-11]$\\
		\hline
		$\Gamma_4^2$&$\Phi(\Gamma_4^2,x)=x^{n-6}(x-1)[x^5+x^4-nx^3-(n-4)x^2+(4n-19)x+2n-11]$\\
		\hline
		$\Gamma_4^3$&$\Phi(\Gamma_4^3,x)=x^{n-8}(x-1)^2(x+1)^2[x^4-(n-1)x^2+4x+n-7]$\\
		\hline
		$\Gamma_4^4$&$\Phi(\Gamma_4^4,x)=x^{n-7}(x-1)^2(x+1)[x^4+x^3-(n-1)x^2-(n-5)x+4n-24]$\\
		\hline
	\end{tabular}
\end{table}
		
\end{document}